\documentclass[9pt]{amsart}
\usepackage{amsmath,amssymb,amsthm,mathtools,dsfont,bm}

\usepackage[dvips]{graphicx}
\usepackage{color}
\usepackage{wasysym}
\newtheorem{lem}{Lemma}[section]
\newtheorem{prop}{Proposition}[section]

\newtheorem{thm}{Theorem}[section]

\theoremstyle{definition}

\theoremstyle{remark}

\theoremstyle{remark}
\newtheorem{remark}{Remark}[section]
\newtheorem{assum}{Assumption}
\numberwithin{equation}{section}

\newcommand{\C}{{\mathbb C}}
\newcommand{\N}{{\mathbb N}}

\newcommand{\R}{{\mathbb R}}

\definecolor{blu}{rgb}{0,0,1}

\newcommand{\pt}{\partial}
\newcommand{\DD}{\Delta}
\newcommand{\hD}{\sqrt{-\Delta}}

\newcommand{\Rcal}{\mathcal{R}}

\newcommand{\be}{\begin{equation}}
\newcommand{\ee}{\end{equation}}

\newcommand{\eps}{\varepsilon}

\newcommand{\Copt}{\mathbf{C}}

\newcommand{\weakto}{\rightharpoonup}

\catcode`@=11
\def\section{\@startsection{section}{1}%
  \z@{1.5\linespacing\@plus\linespacing}{.5\linespacing}%
  {\normalfont\bfseries\large\centering}}
\catcode`@=12

\begin{document}
\title[On Traveling Solitary Waves for Half-Wave Equations]{On Traveling Solitary Waves and  Absence of Small Data Scattering for  Nonlinear Half-Wave Equations}
\author{Jacopo Bellazzini}
\address{J. Bellazzini,
\newline  Universit\`a di Sassari, Via Piandanna 4, 70100 Sassari, Italy}%
\email{jbellazzini@uniss.it}%
\author{Vladimir Georgiev}
\address{V. Georgiev
\newline Dipartimento di Matematica Universit\`a di Pisa
Largo B. Pontecorvo 5, 56100 Pisa, Italy\\
 and \\
 Faculty of Science and Engineering \\ Waseda University \\
 3-4-1, Okubo, Shinjuku-ku, Tokyo 169-8555 \\
Japan and IMI--BAS, Acad.
Georgi Bonchev Str., Block 8, 1113 Sofia, Bulgaria}%
\email{georgiev@dm.unipi.it}%
\author{Enno Lenzmann}
\address{E. Lenzmann,
\newline Universit\"at Basel, Departement Mathematik und Informatik, Spiegelgasse 1, CH-4051 Basel, Switzerland}%
\email{enno.lenzmann@unibas.ch}
\author{Nicola Visciglia}
\address{N. Visciglia, \newline Dipartimento di Matematica Universit\`a di Pisa
Largo B. Pontecorvo 5, 56100 Pisa, Italy}%
\email{viscigli@dm.unipi.it}
\begin{abstract}
We consider nonlinear half-wave equations with focusing power-type nonlinearity
$$
i \pt_t u = \sqrt{-\Delta} \, u - |u|^{p-1} u, \quad \mbox{with $(t,x) \in \R \times \R^d$}
$$
with exponents $1 < p < \infty$ for $d=1$ and $1 < p < (d+1)/(d-1)$ for $d \geq 2$. We study traveling solitary waves of the form
$$
u(t,x) = e^{i\omega t} Q_v(x-vt)
$$
with frequency $\omega \in \R$, velocity $v \in \R^d$, and some finite-energy profile $Q_v \in H^{1/2}(\R^d)$, $Q_v \not \equiv 0$. We prove that traveling solitary waves for speeds $|v| \geq 1$ do not exist. Furthermore, we generalize the non-existence result to the square root Klein--Gordon operator $\sqrt{-\DD+m^2}$ and other nonlinearities.

As a second main result, we show that small data scattering fails to hold for the focusing half-wave equation in any space dimension. The proof is based on the existence and properties of traveling solitary waves for speeds $|v| < 1$. Finally, we discuss the energy-critical case when $p=(d+1)/(d-1)$ in dimensions $d \geq 2$.
\end{abstract}

\maketitle

\section{Introduction and Main Results}

\label{sec:intro}

In this paper, we consider half-wave equations in arbitrary space dimensions with focusing power-type nonlinearity. The corresponding Cauchy problem with initial data in the Sobolev space $H^{1/2}(\R^d)$ is given by
\be \tag{HW} \label{eq:HW}
\left \{ \begin{array}{l} i \pt_t u = \hD  u - |u|^{p-1} u , \\
u(0,x) = u_0(x) , \quad u_0 \in H^{1/2}(\R^d) . \end{array} \right .
\ee
Here $p > 1$ is a real exponent that satisfies a suitable condition stated in \eqref{eq:p_cond} below. As usual, the operator $\hD$ denotes the square root of Laplacian in $\R^d$ defined in Fourier space through  $\hD u = \mathcal{F}^{-1} ( |\xi| \mathcal{F} )$. Let us mention that nonlinear half-wave equations of the form \eqref{eq:HW} or similar types have recently attracted some substantial attention in the area of dispersive nonlinear PDE. From the physical point of view, the motivation to study such as equations ranges from turbulence phenomena, wave propagation, continuum limits of long-range lattice system, and models for gravitational collapse in astrophysics; see, e.\,g., \cite{ES,KLS,FL2,We2}.

On a formal level, the Cauchy problem \eqref{eq:HW} enjoys the conservation of energy and $L^2$-mass, which are given by the expressions
\be
E(u) = \frac 1 2 \int_{\R^d} \overline{u} \hD u \, dx - \frac{1}{p+1} \int_{\R^d} |u|^{p+1} \, dx, \quad M(u) = \int_{\R^d} |u|^2 \, dx,
\ee
respectively. As a consequence, the space $H^{1/2}(\R^d)$ arises as a natural choice for the energy space related to (HW). Mostly in what follows, we  assume that the real exponent $p > 1$ in (HW) is energy-subcritical, which means that
\be \label{eq:p_cond}
1 < p < p_*  \quad \mbox{with} \quad p_* := \begin{dcases*} \infty & for $d=1$, \\ \displaystyle \frac{d+1}{d-1} & for $d \geq 2$. \end{dcases*}
\ee
Below, we shall also make some comments on the energy-critical case $p=p_*$ in dimensions $d \geq 2$. For well-posedness and blowup results results about the Cauchy problem (HW), we refer the reader to \cite{KLR,BGV,OV}.

It is fair to say that the nonlinear half-wave equation shares many features with classical nonlinear Schr\"odinger equations (NLS) and nonlinear wave equations (NLW). In particular, due to the focusing nature of the nonlinearity, we have existence of {\em solitary waves} for (HW). More precisely, for a given velocity $v \in \R^d$ and frequency parameter $\omega \in \R$, we can seek {\em traveling solitary waves} for \eqref{eq:HW} with finite energy. By definition, these are solutions of the form
\be
u(t,x) = e^{it \omega} Q_{v,\omega}(x-v t) \in H^{1/2}(\R^d).
\ee
A crucial fact, however, is that (HW) does not enjoy any known Galilean or Lorentz boost symmetry, which would enable us to explicitly obtain traveling solitary waves with profile $Q_v \in H^{1/2}(\R^d)$ from profiles at rest corresponding to the case $v=0$. To compensate for the lack of such an explicit boost transformation, we study the equation satisfied by $Q_v$ itself, i.\,e.,
\be \label{eq:Qv}
\hD Q_{v,\omega} + i v \cdot \nabla Q_{v,\omega} + \omega Q_{v,\omega} - |Q_{v,\omega}|^{p-1} Q_{v,\omega} = 0.
\ee
In view of its symbol in Fourier space, the properties of the pseudo-differential operator $\hD + i v \cdot \nabla$ will crucially depend on the size of the traveling speed, i.\,e.~whether it holds that
$$
|v| < 1 \quad \mbox{or} \quad |v| \geq 1.
$$
As the main results in the present paper, we will establish the following two results about (HM).
\begin{itemize}
\item Traveling solitary waves with speeds $|v| \geq 1$ do no exist.
\item Small data in energy space do not scatter to free solutions in general.
\end{itemize}
In fact, the absence of small data will be due to the fact that traveling solitary with arbitrarily small energy are shown to exist for speeds $|v| < 1$ close to 1. We mention that the general failure of small data scattering for (HM) is in striking contrast to the well-known results for (NLS) and (NLW).

\subsection{Non-Existence of Traveling Solitary Waves for $|v| \geq 1$}
As our first main result, we rule out the existence of (non-trivial) traveling solitary waves for (HW) with speeds larger the unity (in our choice of physical units here).

\begin{thm} \label{thm:nofast}
Let $d \in \N$ and $1 < p < p_*$. Suppose $Q_{v,\omega} \in H^{1/2}(\R^d)$ solves \eqref{eq:Qv} with some $v \in \R^d$ and $\omega \in \R$. Then  it holds
$$
Q_{v,\omega} \equiv 0,
$$
provided that one of the following conditions holds:
\begin{enumerate}
 \item[(i)] $|v| > 1$,
 \item[(ii)] $|v|=1$ and $d=1$.
 \end{enumerate}
\end{thm}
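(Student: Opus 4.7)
My plan is Fourier-analytic. Transforming the profile equation \eqref{eq:Qv} yields
\begin{equation*}
m(\xi)\,\hat Q(\xi)=\widehat{F(Q)}(\xi),\qquad m(\xi):=|\xi|-v\cdot\xi+\omega,\quad F(Q):=|Q|^{p-1}Q,
\end{equation*}
and in the subcritical range $p<p_*$ both sides are genuine $L^2$ functions by Sobolev embedding. Pairing with $\bar Q$ via Plancherel gives the basic identity
\begin{equation*}
\int_{\R^d}(|\xi|-v\cdot\xi)|\hat Q|^2\,d\xi+\omega\,\|Q\|_{L^2}^2=\|Q\|_{L^{p+1}}^{p+1},
\end{equation*}
while differentiating the associated action along the scaling $Q_\lambda(x)=Q(\lambda x)$ at $\lambda=1$ yields a Pohozaev identity. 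Combining the two produces
\begin{equation*}
\bigl[(d+1)-(d-1)p\bigr]\!\int(|\xi|-v\cdot\xi)|\hat Q|^2\,d\xi=d(p-1)\,\omega\,\|Q\|_{L^2}^2,
\end{equation*}
which ties $\mathrm{sign}(\omega)$ to the sign of $\int(|\xi|-v\cdot\xi)|\hat Q|^2$ in the subcritical regime.

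For case~(i), $|v|>1$, the symbol $m$ is unbounded below in the direction of $v$ and vanishes on a non-empty codimension-one hypersurface $\mathcal{S}\subset\R^d$ for every $\omega$. Continuity of $\widehat{F(Q)}$ (which holds under modest regularity coming from the equation) forces $\widehat{F(Q)}|_\mathcal{S}\equiv 0$. I would then combine this vanishing with refined pairing identities, testing the equation against $\bar Q$ weighted by the indicators of $\{m>0\}$ and $\{m<0\}$ separately, and exploit the sign mismatch with the Pohozaev identity to reach a contradiction. The modulation $Q\mapsto e^{-i\alpha\cdot x}Q$ translates the Fourier picture and normalizes the local geometry of $\mathcal{S}$, reducing the analysis near $\mathcal{S}$ to a higher-dimensional analogue of the one-dimensional Hardy picture used in case~(ii).

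For case~(ii), $|v|=1$ with $d=1$, take $v=+1$ after reflection. Then $\hD+\ii\pt_x=2P_-\hD$, where $P_-$ is the projection onto negative frequencies, so the equation becomes $2P_-\hD Q+\omega Q=F(Q)$. The Pohozaev identity forces $\omega\geq 0$. If $\omega=0$, it further forces $\hat Q$ to be supported in $[0,\infty)$ while $\widehat{F(Q)}$ is supported in $(-\infty,0]$, so $\|Q\|_{L^{p+1}}^{p+1}=\int F(Q)\bar Q\,dx$ equals an integral over the intersection of these supports, i.e.\ zero, and $Q\equiv 0$. If $\omega>0$, then $h:=F(Q)-\omega Q=2P_-\hD Q$ lies in the Hardy space $H^2$ of the lower half-plane, and the pointwise identity $h=Q(|Q|^{p-1}-\omega)$ combined with Hardy-space rigidity (log-integrability of $|h|$ against $dx/(1+x^2)$, inner--outer factorization) should force $Q\equiv 0$. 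As a sanity check, for real-valued $Q$ the equation decouples into the real part $\hD Q=(|Q|^{p-1}-\omega)Q$ and the imaginary relation $H\hD Q=0$, i.e.\ $\pt_x Q=0$, immediately giving $Q\equiv 0$.

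The principal obstacle in both cases is the passage from linear symbol identities, which by themselves only yield sign constraints, to the full rigidity $Q\equiv 0$: case~(ii) rests on a Hardy-space argument that must reconcile analyticity with the $p$-homogeneous nonlinearity $|Q|^{p-1}Q$, while case~(i) requires a quantitative exploitation of the sign-indefiniteness of $m$ via frequency-localized pairings combined with the algebraic structure of $F(Q)$.
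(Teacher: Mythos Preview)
Your proposal is genuinely different from the paper's route and carries real gaps. The central difficulty, which the paper flags explicitly, is that for $|v|\geq 1$ the operator $\hD+iv\cdot\nabla+\omega$ has no resolvent with useful decay, so one cannot establish $x\cdot\nabla Q\in L^2$ and hence cannot justify the Pohozaev identity you invoke. Your combined identity $[(d+1)-(d-1)p]\int(|\xi|-v\cdot\xi)|\hat Q|^2=d(p-1)\omega\|Q\|_{L^2}^2$ therefore rests on an unproven step. Even granting it, your case~(i) plan (continuity of $\widehat{F(Q)}$ on the zero set of $m$, sign-split pairings, reduction via modulation) is a sketch rather than an argument; and for case~(ii) with $\omega>0$ you acknowledge that the Hardy-space rigidity step reconciling analyticity with the nonlinearity $|Q|^{p-1}Q$ is not worked out.

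The key identity you are missing comes not from scaling but from \emph{multiplication by $x_k$}: formally testing \eqref{eq:Qv} against $x_k\bar Q$ and taking imaginary parts yields
\[
\int_{\R^d}\frac{\xi_k}{|\xi|}\,|\hat Q(\xi)|^2\,d\xi \;=\; v_k\,\|Q\|_{L^2}^2,
\]
i.e.\ $(Q,\ii\,\Rcal_k Q)=v_k\|Q\|_{L^2}^2$ with $\Rcal_k$ the Riesz transform. Since $\|\Rcal_k\|_{L^2\to L^2}\leq 1$, this immediately forces $Q\equiv 0$ when $|v|>1$. The paper makes this rigorous without any decay on $Q$ by using precisely the unitary modulations $U_a Q=e^{\ii a\cdot x}Q$ that you mention only in passing: one computes $(U_a Q,(D_v^a-D_v)Q)=0$ for all $a$, divides by $|a|$, and passes to the limit $a\to 0$ by dominated convergence. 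For $|v|=1$ in $d=1$, equality in the Riesz bound forces $\hat Q$ to be supported on a half-line, so the profile equation collapses to the ODE $-\ii\alpha\,Q'+(\omega-|Q|^{p-1})Q=0$; then either $\alpha=0$ and $(\omega-|Q|^{p-1})Q=0$ a.e., or $\alpha\neq 0$ and $(|Q|^2)'=0$, in both cases giving $Q\equiv 0$ directly, with no Hardy-space machinery required.
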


\begin{remark}
In Section \ref{sec:non_exist} below, we will prove a more general result for equations of the form
$$
\sqrt{-\DD+m^2} \, Q_v + i v \cdot \nabla Q_{v} + f(|Q_v|) Q_v= 0,
$$
where $f : [0,\infty) \to \R$ is some general nonlinearity and $\sqrt{-\DD+m^2}$ with $m \geq 0$ is the square root Klein-Gordon operator. Furthermore, the arguments laid out in Section \ref{sec:non_exist} will also carry over to nonlocal focusing nonlinearities. For instance, it follows that traveling solitary waves for {\em boson star equation} (see \cite{ES, FL2})
\be \label{eq:boson}
i \pt_t u = \sqrt{-\Delta+m^2} \, u - (|x|^{-1} \ast |u|^2) u, \quad \mbox{with $(t,x) \in \R \times \R^3$}
\ee
do not exist with speeds $|v| > 1$.
\end{remark}

\begin{remark}
We believe that Theorem \ref{thm:nofast} can be extended to $|v|=1$ in higher dimensions $d \geq 2$. The interested reader may consult Section \ref{sec:non_exist} to learn where the condition $d=1$ enters when treating the limiting case $|v|=1$.
\end{remark}

\begin{remark}
For speeds $|v| < 1$, non-trivial solutions $Q_v \in H^{1/2}(\R^d)$ for equation \eqref{eq:Qv} do exist. This fact will be exploited in the next subsection when we show a general non-scattering result for \eqref{eq:HW} in energy space.
\end{remark}

Let us comment on the strategy behind the proof of Theorem \ref{thm:nofast}. A formal starting point for the analysis is to integrate \eqref{eq:Qv} against $x \overline{Q}_v$ and to take imaginary parts. Indeed, if we proceed in this formal fashion and pretend that integration by parts can be justified, we arrive at the following set of identities
\be \label{eq:virial_intro}
\int_{\R^d} \overline{Q}_v (i \mathcal{R}_k Q_v) \, dx = v_k \int_{\R^d} |Q_v|^2 \, dx
\ee
for all $k=1, \ldots, d$. Here the operators $\mathcal{R}_k$ denote the classical {\em Riesz transforms} in $L^2(\R^d)$ defined in Fourier space by
$$
\widehat{(\mathcal{R}_k f)}(\xi) = -i \frac{\xi_k}{|\xi|} \widehat{f}(\xi).
$$
For $d=1$ space dimension, we recall that $\mathcal{R}_1=H$ is the {\em Hilbert transform} on the real line. If we take \eqref{eq:virial_intro} for granted, the obvious $L^2$-bound $\| \mathcal{R}_k f \|_{L^2} \leq \| f \|_{L^2}$ applied to \eqref{eq:virial_intro} then shows that $|v| > 1$ implies that $\|Q_v\|_{L^2}=0$ must hold. This concludes our formal proof of Theorem \ref{thm:nofast} part (i). [The case (ii) needs a refined argument also to show that $Q_v=0$ has to vanish identically.]

Of course, the formal argument sketched above calls for a rigorous justification. However, an approach that initially tries to establish some needed spatial decay by proving that $x Q_v \in L^2(\R^d)$ will be plagued with some serious difficulties due to the assumption $|v| > 1$ (with some delicate effort one can deal with $|v|=1$). In fact, for speeds $|v| > 1$, it seems unclear how to prove some spatial decay of $Q_v$ because the differential operator $\hD + i v \cdot \nabla+\omega$ does not have a resolvent with good decay bounds in $x$-space. To overcome this obstacle, we shall follow a different path by avoiding of expressions such as $x Q_v$ at all. Instead of using the unbounded (mutliplication) operator $f \mapsto xf$, we consider the family of unitary operators on $L^2(\R^d)$ generated by $x$. That is, for $a \in \R^d$ given, we set
\be
(U_a f)(x) = e^{i a \cdot x} f(x)
\ee
defined for $f \in L^2(\R^d)$. With the use of the unitary family $U_a$ and by taking carefully the limit $a \to 0$, we can devise a well-defined argument that will lead us to \eqref{eq:virial_intro}. A further benefit from using $U_a$ is that our proof easily generalizes to the square root Klein--Gordon operator $\sqrt{-\Delta^2+m^2}$; see Section \ref{sec:non_exist} below.

Finally, as an interesting aside, we mention that this scheme of using unitary families is reminiscent to the classical proof of the virial theorem for Schr\"odinger operators due to Weidmann \cite{Weid}, where instead of the unbounded self-adjoint operator of dilations $A= -\frac{i}{2} (x \cdot \nabla + \nabla \cdot x)$ in $L^2(\R^d)$ its corresponding unitary one-parameter group $\{ e^{i \eps A} \}_{\eps \in \R}$ is used.

\subsection{Failure of Small Data Scattering}

Our second main result concerns the failure of small data scattering for (HW) in energy space. We have the following general result.

\begin{thm} \label{thm:no_scat}
Let $d \in \N$ and $1 < p < p_*$. Then the initial-value problem \eqref{eq:HW} has no small data scattering in energy space.

More precisely, for any $\eps > 0$, there is a global-in-time solution $u \in C^0([0, \infty); H^{1/2}(\R^d))$ of \eqref{eq:HW} with $0 < \| u(t) \|_{H^{1/2}} < \eps$ and the property such that {\bf no} function $\phi \in H^{1/2}(\R^d)$ exists with
$$
\lim_{t \to +\infty} \| u(t) - e^{-it \hD} \phi \|_{H^{1/2}} = 0.
$$
An analogous statement is true for negative times $t \to -\infty$.
\end{thm}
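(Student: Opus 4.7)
The plan is to exhibit, for any $\varepsilon>0$, a traveling solitary wave $u(t,x)=e^{it\omega}Q_{v,\omega}(x-vt)$ with subluminal speed $|v|<1$ and with $\|Q_{v,\omega}\|_{H^{1/2}}<\varepsilon$, and then to argue that no such wave can scatter in $H^{1/2}$. The underlying heuristic is that the linear propagator $e^{-it\hD}$ transports $L^2$-mass at the ``light speed'' $|v|=1$, whereas the soliton moves at a strictly subluminal speed; in a window that follows the soliton, the linear part therefore escapes while the nonlinear part remains coherent, ruling out scattering.

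For the existence step one must construct non-trivial $Q_{v,\omega}\in H^{1/2}$ solving \eqref{eq:Qv} with $|v|<1$ and arbitrarily small $H^{1/2}$-norm. I would rely on the variational construction of subluminal travelers alluded to in the last remark following Theorem~\ref{thm:nofast}---minimization of a Weinstein/Gagliardo--Nirenberg ratio built from the nonnegative quadratic form $\langle Q,(\hD+iv\cdot\nabla)Q\rangle$ together with an $L^2$-control---combined with an asymptotic analysis of the minimizers as $|v|\to 1^{-}$. Since the symbol $|\xi|-v\cdot\xi$ degenerates along the direction $v$, the associated sharp constant blows up in the subluminal limit in such a way that the optimal profile is forced to have vanishing $H^{1/2}$-norm, at least in the subcritical range $p<p_*$. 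This existence-with-small-norm statement is the principal obstacle of the theorem; once in hand it produces global solutions of \eqref{eq:HW} with $\|u(t)\|_{H^{1/2}}<\varepsilon$ for all $t$.

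Given such a small-norm soliton, assume by contradiction that $\phi\in H^{1/2}$ satisfies $\|u(t)-e^{-it\hD}\phi\|_{H^{1/2}}\to 0$ as $t\to+\infty$. Pick $\psi\in C^\infty_c(\R^d)$ with $\psi\geq 0$ and $c:=\int \psi\,|Q_{v,\omega}|^2\,dx>0$, and introduce the moving-frame densities
\be
A(t):=\int_{\R^d}\psi(x-vt)\,|u(t,x)|^2\,dx,\qquad B(t):=\int_{\R^d}\psi(x-vt)\,|e^{-it\hD}\phi(x)|^2\,dx.
\ee
By translation of the integration variable, $A(t)\equiv c$ along the soliton, while the continuous embedding $H^{1/2}\hookrightarrow L^2$ combined with Cauchy--Schwarz gives $|A(t)-B(t)|\to 0$. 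After the change of variable $y=x-vt$ one rewrites $B(t)=\int\psi(y)\,|e^{-itH_v}\phi(y)|^2\,dy$, where $H_v$ is the self-adjoint Fourier multiplier with symbol $|\xi|-v\cdot\xi$. For $|v|<1$ this symbol is smooth and has nonvanishing gradient $\xi/|\xi|-v$ on $\R^d\setminus\{0\}$, so $H_v$ has purely absolutely continuous spectrum; a standard RAGE / Riemann--Lebesgue argument then yields $B(t)\to 0$. Hence $c=0$, contradicting $Q_{v,\omega}\not\equiv 0$, and the same argument applies for $t\to-\infty$.
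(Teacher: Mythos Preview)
Your existence step follows the paper's variational route (optimizers of a $v$-dependent Gagliardo--Nirenberg inequality, with the sharp constant blowing up like $(1-|v|)^{-d(p-1)/2}$), but your assertion that ``the optimal profile is forced to have vanishing $H^{1/2}$-norm'' is not quite right in dimension $d=1$. With the natural normalization $\omega=1$, the Pohozaev identities give $T_v(Q_v)\sim M(Q_v)\sim (1-|v|)^d$, from which one only gets $\|Q_v\|_{\dot H^{1/2}}^2\lesssim (1-|v|)^{d-1}$; for $d=1$ this is merely $O(1)$. The paper fixes this by rescaling: setting $Q_v^{(\lambda)}(x)=\lambda^{1/(p-1)}Q_v(\lambda x)$ with $\lambda=(1-|v|)^{(p-1)/(2p)}$ one obtains a profile solving \eqref{eq:Qv} with $\omega=\lambda$ and $\|Q_v^{(\lambda)}\|_{H^{1/2}}^2\lesssim (1-|v|)^{(p+3)/(2p)}+(1-|v|)^{1/p}\to 0$. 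You should include this step rather than claim smallness of the unrescaled optimizer.

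Your non-scattering argument is genuinely different from the paper's and, once tightened, arguably cleaner. The paper splits into two cases: for $d=1$ it decomposes $Q_v$ and $\phi$ into positive and negative Fourier frequencies and uses that $e^{-it\hD}$ acts as translation by $\pm t$ on $\Pi_\pm(H^{1/2})$, reducing to weak convergence of translates; for $d\geq 2$ it proves $Q_v\in L^{(p+1)/p}$ (via Mikhlin--H\"ormander) and pairs against the dispersive decay of $e^{-it\hD}\psi$ for test functions $\psi$. Your moving-frame localized mass argument handles all dimensions at once and avoids both the frequency splitting and the $L^{(p+1)/p}$ step. However, the label ``RAGE / Riemann--Lebesgue'' is not the right justification for $B(t)\to 0$: absolute continuity of the spectrum of $H_v$ gives only $e^{-itH_v}\phi\rightharpoonup 0$ weakly, and RAGE gives only time-averaged decay through a compact operator, neither of which yields decay of the quadratic quantity $B(t)$ directly. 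What actually makes your argument work is the minimal-velocity bound $|\nabla h_v(\xi)|=|\xi/|\xi|-v|\geq 1-|v|>0$: for $\widehat\phi\in C_c^\infty(\R^d\setminus\{0\})$ non-stationary phase gives $\sup_{y\in\mathrm{supp}\,\psi}|e^{-itH_v}\phi(y)|\to 0$, and the general case follows by $L^2$-density. Alternatively, you can combine the weak convergence with compactness of multiplication by $\psi$ as a map $H^{1/2}\to L^2$ (Rellich), using that $\phi\in H^{1/2}$ and that $e^{-itH_v}$ preserves the $H^{1/2}$-norm. Either way, the conclusion $B(t)\to 0$ is correct; just name the mechanism precisely.
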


\begin{remark}
The non-scattering solutions will be given by small traveling solitary waves whose profile satisfies $\| Q_v \|_{H^{1/2}} < \eps$, which will be achieved by choosing speeds $|v| < 1$ close to 1. We note that such $Q_v(x)$ and hence $u(t,x)$ are {\em not radially symmetric} in $x \in \R^d$. Thus it may well be true that small data scattering for (HW) in energy space does hold if one restricts to {\em radial data}. Any proof (or disproof) of this claim would be very interesting.
\end{remark}

\begin{remark}
The non-scattering solutions provided by Theorem \ref{thm:no_scat} can also be used to show small data scattering for (HW) also fails in the corresponding critical Sobolev norm $\| \cdot \|_{\dot{H}^{s_c}}$ where
$$
s_c = \frac{d}{2} - \frac{1}{p-1}.
$$
More precisely, if $s_c \geq 0$, then small initial data for (HW) in $\dot{H}^{s_c}(\R^d)$ cannot scatter to a free solution in general. In particular, this is in drastic contrast to the celebrated result for $L^2$-critical focusing NLS
\end{remark}

\begin{remark}
For the $L^2$-critical focusing half-wave equation in one-dimension (i.\,e.~$d=1$ and $p=3$), the existence of non-scattering solutions with arbitrarily small $L^2$-mass was observed in \cite{KLR}. However, we find remarkable that such phenomenon also carries over to higher dimensions $d \geq 2$, where the linear half-wave propagator $e^{-it \hD}$ has dispersive properties.
\end{remark}

Let us briefly comment on the proof of Theorem \ref{thm:no_scat}. The idea is to construct (non-trivial) traveling solitary waves for (HW) with velocity $v \in \R^d$ such that $|v| < 1$ by considering the variational problem
\be  \label{def:sup}
\sup_{u \in H^{1/2}(\R^d) \setminus \{ 0 \} }  \frac{\displaystyle \int_{\R^d} |u|^{p+1} \, dx}{(T_v(u))^{\frac{d(p-1)}{2}} M(u)^{\frac{p+1}{2} - \frac{d(p-1)}{2}}},
\ee
where  we recall that $M(u) = \int_{\R^d} |u|^2 \, dx$ denotes the $L^2$-mass and we introduce the functional
$$
T_v(u) = \int_{\R^d} \overline{u} ( \hD + i v \cdot \nabla ) u \, dx.
$$
By adapting known methods from the calculus of variations, one finds that the supremum in \eqref{def:sup}  is indeed attained and that its optimizers $Q_v \in H^{1/2}(\R^d)$ solve the profile \eqref{eq:Qv}, after a possible rescaling. The key for proving Theorem \ref{thm:no_scat} is now to show that we can find optimizers such that
$$
\|Q_v \|_{H^{1/2}} \to 0 \quad \mbox{as} \quad  |v| \to 1^-.
$$
See Section \ref{sec:no_scatt} for more details.

Finally, we mention that, for the special case $p=3$ and $d=1$, such a variational construction of small traveling solitary waves was done in \cite{KLR} for the first time. Later is was used to construct turbulent two-soliton solutions in \cite{GLPR} for the cubic half-wave equation in one dimension, by exploiting a certain regime close to the completely integrable cubic Szeg\"o equation.

\subsection{On the Energy-Critical Case}
Let us explain  how the results above extend to the half-wave equation in the energy-critical case when $p=p_*$ and $d \geq 2$. That is, we consider the equation
\be \label{eq:HW_crit}
i \pt_t u = \hD \, u - |u|^{\frac{2}{d-1}} u, \quad (t,x) \in \R \times \R^d.
\ee
The natural choice for initial data is now the homogeneous Sobolev space $\dot{H}^{1/2}(\R^d)$. This fact poses some technical difficulties, since the use of an identity like \eqref{eq:virial_intro} presuppose that the traveling solitary wave profiles $Q_v$ also belong to $L^2(\R^d)$. Hence ruling out traveling solitary waves for \eqref{eq:HW_crit} in $H^{1/2}(\R^d)$ with speeds $|v| > 1$ follows from a straightforward adaption of the argument below. However, the more general case for profiles $Q_v \in \dot{H}^{1/2}(\R^d)$ is left as an interesting open problem.

On the other hand, the failure of small data scattering for energy-critical half-wave equation \eqref{eq:HW_crit} can carried out along the same lines as in the energy-subcritical case. We have the following result.

\begin{thm} \label{thm:HW_crit}
Let $d \geq 2$. Then the energy-critical half-wave equation \eqref{eq:HW_crit} has no small data scattering in energy space.

More precisely, for any $\eps > 0$, there is a global-in-time solution $u \in C^0([0, \infty); \dot{H}^{1/2}(\R^d))$ of \eqref{eq:HW_crit} with $0 < \| u(t) \|_{\dot{H}^{1/2}} < \eps$ and the property such that {\bf no} function $\phi \in  \dot{H}^{1/2}(\R^d)$ exists with
$$
\lim_{t \to +\infty} \| u(t) - e^{-it \hD} \phi \|_{\dot{H}^{1/2}} = 0.
$$
An analogous statement is true for negative times $t \to -\infty$.
\end{thm}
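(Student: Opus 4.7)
The plan is to mirror the variational construction from Theorem~\ref{thm:no_scat}, with the modifications forced by the critical exponent: the problem is naturally posed in $\dot{H}^{1/2}(\R^d)$, and at $p=p_*=(d+1)/(d-1)$ the mass exponent $(p+1)/2 - d(p-1)/2$ appearing in~\eqref{def:sup} vanishes, so the variational problem becomes scale-invariant. Accordingly, for $v\in\R^d$ with $|v|<1$, I would introduce
\begin{equation*}
J_v \;:=\; \sup_{u\in\dot{H}^{1/2}(\R^d)\setminus\{0\}}\; \frac{\|u\|_{L^{2d/(d-1)}}^{2d/(d-1)}}{T_v(u)^{d/(d-1)}}.
\end{equation*}
The two-sided estimate $(1-|v|)|\xi|\leq|\xi|-v\cdot\xi\leq(1+|v|)|\xi|$ makes $T_v$ equivalent to $\|\cdot\|_{\dot{H}^{1/2}}^2$, and the critical Sobolev embedding $\dot{H}^{1/2}(\R^d)\hookrightarrow L^{2d/(d-1)}(\R^d)$ (valid for $d\geq 2$) yields $0<J_v<\infty$.

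Existence of a maximizer of $J_v$ is the core analytical step. Because $J_v$ is invariant under both translations and the critical dilations $u\mapsto\lambda^{(d-1)/2}u(\lambda\,\cdot)$, direct compactness fails and I would apply a profile decomposition for bounded sequences in $\dot{H}^{1/2}(\R^d)$, in the spirit of G\'erard and Keraani, to a maximizing sequence normalized by $T_v(u_n)=1$. Comparing the asymptotic decompositions of $T_v(u_n)$ and $\|u_n\|_{L^{2d/(d-1)}}^{2d/(d-1)}$ and exploiting the strict power-mean inequality $\sum_j a_j^{d/(d-1)} \leq (\sum_j a_j)^{d/(d-1)}$ (with equality only when one term is non-zero) shows that a single profile $Q_v\in\dot{H}^{1/2}(\R^d)\setminus\{0\}$ carries all the $L^{2d/(d-1)}$-mass, yielding the maximizer. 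The associated Euler--Lagrange equation reads $(\hD+iv\cdot\nabla)Q_v=\mu_v|Q_v|^{2/(d-1)}Q_v$ with $\mu_v=T_v(Q_v)/\|Q_v\|_{L^{2d/(d-1)}}^{2d/(d-1)}=J_v^{-1}$ (when $T_v(Q_v)=1$), and the amplitude rescaling $\tilde Q_v:=\mu_v^{(d-1)/2}Q_v$ yields a solution of~\eqref{eq:Qv} with $\omega=0$ and $p=p_*$. Since $T_v(Q_v)\geq(1-|v|)\|Q_v\|_{\dot{H}^{1/2}}^2$, one obtains
\begin{equation*}
\|\tilde Q_v\|_{\dot{H}^{1/2}} = J_v^{-(d-1)/2}\|Q_v\|_{\dot{H}^{1/2}} \leq J_v^{-(d-1)/2}(1-|v|)^{-1/2}.
\end{equation*}
A lower bound of the form $J_v\gtrsim (1-|v|)^{-(d+1)/(2(d-1))}$, obtained by testing $J_v$ against an anisotropic Fourier wave packet of width $\sim 1$ along $v$ and $\sim\sqrt{1-|v|}$ transverse to it, then gives $\|\tilde Q_v\|_{\dot{H}^{1/2}}\to 0$ as $|v|\to 1^-$ for every $d\geq 2$, which provides for any $\eps>0$ a traveling solitary wave $u(t,x):=\tilde Q_v(x-vt)$ with $0<\|u(t)\|_{\dot{H}^{1/2}}<\eps$.

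It remains to rule out scattering of $u$ in $\dot H^{1/2}(\R^d)$. Suppose, toward a contradiction, that $u(t)\to e^{-it\hD}\phi$ in $\dot{H}^{1/2}$ as $t\to+\infty$ for some $\phi\in\dot{H}^{1/2}(\R^d)$. Applying the $\dot{H}^{1/2}$-isometric translation $T_{vt}f(x):=f(x+vt)$, which in Fourier variables acts by multiplication by $e^{ivt\cdot\xi}$ and therefore satisfies $T_{vt}\,e^{-it\hD}=e^{-itA_v}$ with $A_v:=\hD+iv\cdot\nabla$, one obtains
\begin{equation*}
\tilde Q_v - e^{-itA_v}\phi \;\longrightarrow\; 0 \quad\text{strongly in}\quad \dot{H}^{1/2}(\R^d).
\end{equation*}
The symbol $|\xi|-v\cdot\xi$ of $A_v$ has gradient $\xi/|\xi|-v$ of modulus $\geq 1-|v|>0$ for every $\xi\neq 0$, so a standard Riemann--Lebesgue / non-stationary phase argument applied to the pairing $\langle e^{-itA_v}\phi,\psi\rangle_{\dot{H}^{1/2}}=\int|\xi|e^{-it(|\xi|-v\cdot\xi)}\hat\phi(\xi)\overline{\hat\psi(\xi)}\,d\xi$ (the integrand is in $L^1$ since $|\xi|^{1/2}\hat\phi$ and $|\xi|^{1/2}\hat\psi$ both belong to $L^2$) shows that $e^{-itA_v}\phi\weakto 0$ weakly in $\dot{H}^{1/2}(\R^d)$ as $t\to+\infty$. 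Combined with the strong convergence above, this forces $\tilde Q_v\equiv 0$, contradicting $\tilde Q_v\not\equiv 0$. The case $t\to-\infty$ is identical.

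The main technical hurdle is the profile-decomposition step in the critical, scale-invariant setting, where one must simultaneously track translations and dilations of concentrating bubbles and exclude loss of mass to infinity. The remaining ingredients --- the wave-packet lower bound on $J_v$, the Euler--Lagrange analysis, and the Riemann--Lebesgue non-scattering argument --- are rather routine adaptations of classical techniques.
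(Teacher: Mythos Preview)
Your overall strategy matches the paper's: define the scale-invariant Sobolev-type quotient $J_v$ (the paper calls it $\Copt_{v,d,p_*}$), obtain a maximizer by concentration-compactness/profile decomposition, rescale it to a profile solving \eqref{eq:Qv} with $\omega=0$, show its $\dot H^{1/2}$-norm vanishes as $|v|\to 1^-$, and then rule out scattering. Two technical choices differ from the paper and are worth noting.

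\textbf{Lower bound on $J_v$.} The paper adapts the test function from Lemma~\ref{lem:Cv_bound} (a one-dimensional Hardy function $g\in H^{1/2}_+(\R)$ tensored with a narrow Gaussian in the transverse variables) and obtains the sharp two-sided estimate $J_v\sim(1-|v|)^{-d/(d-1)}$; combined with $T_v(W_v)=J_v^{-(d-1)}$ this yields $\|W_v\|_{\dot H^{1/2}}^2\lesssim(1-|v|)^{d-1}$. Your anisotropic wave packet gives the weaker bound $J_v\gtrsim(1-|v|)^{-(d+1)/(2(d-1))}$, which, plugged into your inequality $\|\tilde Q_v\|_{\dot H^{1/2}}\le J_v^{-(d-1)/2}(1-|v|)^{-1/2}$, still produces $\|\tilde Q_v\|_{\dot H^{1/2}}\lesssim(1-|v|)^{(d-1)/4}\to 0$. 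So both work; the paper's test function is sharper, yours is perhaps more elementary.

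\textbf{Non-scattering.} The paper adapts Proposition~\ref{prop:no_scat}, which in dimensions $d\ge 2$ uses the dispersive decay of $e^{-it\hD}$ on $L^{p+1}$ together with an integrability property of $Q_v$. Your argument is different and more direct: passing to the comoving frame reduces the question to weak convergence $e^{-itA_v}\phi\rightharpoonup 0$ in $\dot H^{1/2}$, which follows from a coarea/Riemann--Lebesgue argument since the phase $|\xi|-v\cdot\xi$ has non-vanishing gradient for $|v|<1$. This avoids dispersive estimates entirely and works uniformly in $d\ge 1$; it is a clean alternative to the paper's route.
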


The proof of Theorem \ref{thm:HW_crit} is given in the Appendix of this paper.

\begin{remark}
The failure of small data scattering for the energy-critical half-wave equation \eqref{eq:HW_crit} is again in striking contrast to the behavior for (NLS) and (NLW) with focusing energy-critical nonlinearity; see \cite{KM,KM2,D}.
\end{remark}

\subsection*{Acknowledgments}
J. \,B. and V.\,G. are partially  supported  by   Project 2017 "Problemi stazionari e di evoluzione nelle equazioni di campo nonlineari" of INDAM, GNAMPA - Gruppo Nazionale per l'Analisi Matematica, la Probabilita e le loro Applicazioni. V.\,G. is also  partially supported by Institute of Mathematics and Informatics, Bulgarian Academy of Sciences, by Top Global University Project, Waseda University and  the Project PRA 2018 49 of  University of Pisa. E.\,L.~was partially supported by the Swiss National Science Foundation (SNSF) through Grant No.~200021--149233. In addition, E.~L.~thanks Rupert Frank for valuable discussions and providing us with reference \cite{Brezis}.

\section{Non-Existence of Traveling Solitary Waves for $|v| \geq 1$}

\label{sec:non_exist}
This section is devoted to the proof of Theorem \ref{thm:nofast}. In fact, we shall establish a more general non-existence result for traveling solitary waves with speeds $|v| \geq 1$; see Propositions \ref{prop:no_Qv_1} and \ref{prop:no_Qv_2} below. Throughout this section, we use
$$
(u,v) = \int_{\R^d} \overline{u(x)} v(x) \, dx
$$
to denote the standard complex scalar product in $L^2(\R^d)$.

We first derive the following key result, which can be seen as a {\bf virial-type identity}, which would {\em formally} follow by testing the equation against $x \overline{Q}_v$. For more on this formal point of view, we refer to the Section \ref{sec:intro} above.
\begin{lem} \label{lem:virial}
Let $d \in \N$, $m \geq 0$, and $v =(v_1, \ldots, v_d) \in \R^d$. Suppose $Q_v \in H^{1/2}(\R^d)$ solves
$$
\sqrt{-\DD+m^2} \, Q_v + i v\cdot \nabla Q_v + f(|Q_v|) Q_v = 0 \quad \mbox{in $H^{-1/2}(\R^d)$,}
$$
where $f : [0,\infty) \to \R$ is some real-valued function. Then the identity
$$
\int_{\R^d} \frac{\xi_k}{\sqrt{|\xi|^2 + m^2}} |\widehat{Q}_v(\xi)|^2 \, d\xi = v_k \int_{\R^d} |\widehat{Q}_v(\xi)|^2 \, d \xi
$$
holds for all $k=1, \ldots, d$.
\end{lem}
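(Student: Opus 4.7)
The plan is to make rigorous the formal virial identity that would arise by testing the equation against $x_k \overline{Q}_v$ and taking imaginary parts. Since $x_k Q_v$ need not even belong to $L^2(\R^d)$, I would instead use the bounded unitary multiplication family $U_a f = e^{\ii a \cdot x} f$ suggested in the introduction: it maps $H^{1/2}(\R^d)$ into itself with $\widehat{U_a Q_v}(\xi) = \widehat{Q}_v(\xi - a)$, so that pairing the equation (which holds in $H^{-1/2}(\R^d)$) against $U_a Q_v$ is legitimate for every $a \in \R^d$. Converting that pairing to Fourier variables by Plancherel then gives the master identity
$$
F(a) := \int_{\R^d} \bigl[\sqrt{|\xi|^2 + m^2} - v\cdot \xi\bigr]\, \overline{\widehat{Q}_v(\xi - a)}\, \widehat{Q}_v(\xi)\, d\xi + \int_{\R^d} e^{-\ii a \cdot x}\, f(|Q_v|)|Q_v|^2\, dx = 0
$$
for every $a \in \R^d$.

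Next, I would exploit that $f(|Q_v|)|Q_v|^2$ is real-valued, so the nonlinear term $N(a)$ satisfies $\overline{N(-a)} = N(a)$ and cancels from the combination $F(a) - \overline{F(-a)} = 0$. After the change of variable $\eta = \xi - a$ in the conjugated kinetic integral, what remains simplifies to
$$
\int_{\R^d} \bigl[\sqrt{|\xi|^2 + m^2} - \sqrt{|\xi - a|^2 + m^2}\bigr]\, \overline{\widehat{Q}_v(\xi - a)}\, \widehat{Q}_v(\xi)\, d\xi = (v \cdot a) \int_{\R^d} \overline{\widehat{Q}_v(\xi - a)}\, \widehat{Q}_v(\xi)\, d\xi.
$$

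Finally, I would specialize to $a = t e_k$, divide by $t$, and let $t \to 0^+$. The right-hand side tends to $v_k \int |\widehat{Q}_v|^2\, d\xi$ by strong $L^2$-continuity of translations. On the left, the difference quotient $a_t(\xi) := t^{-1}\bigl[\sqrt{|\xi|^2+m^2} - \sqrt{|\xi-te_k|^2+m^2}\bigr]$ is uniformly bounded by $1$ (since $|\pt_{\xi_k}\sqrt{|\xi|^2+m^2}| \leq 1$) and converges pointwise to $a_0(\xi) := \xi_k/\sqrt{|\xi|^2+m^2}$, which together with $\widehat{Q}_v(\cdot - t e_k) \to \widehat{Q}_v$ in $L^2$ is expected to produce the target $\int a_0(\xi)\,|\widehat{Q}_v(\xi)|^2\,d\xi$. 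The main obstacle is exactly this passage to the limit, since $t$ enters simultaneously through the scalar multiplier $a_t$ and the $L^2$-translate of $\widehat{Q}_v$. The cleanest way to treat it is to split
$$
a_t(\xi)\,\widehat{Q}_v(\xi - t e_k) = a_0(\xi)\,\widehat{Q}_v(\xi - t e_k) + \bigl[a_t(\xi) - a_0(\xi)\bigr]\,\widehat{Q}_v(\xi - t e_k);
$$
the first summand converges to $a_0 \widehat{Q}_v$ in $L^2$ by boundedness of $a_0$, while the second vanishes in $L^2$ by dominated convergence (pointwise $a_t \to a_0$ together with the uniform bound $|a_t - a_0| \leq 2$ and $L^2$-tightness of the translation family). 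A final Cauchy--Schwarz pairing against the fixed factor $\widehat{Q}_v \in L^2$ then closes the argument.
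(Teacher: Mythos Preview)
Your proposal is correct and follows essentially the same route as the paper: both use the unitary family $U_a f = e^{\ii a\cdot x}f$ to obtain, for every $a\in\R^d$, the identity
\[
\int_{\R^d}\bigl[\sqrt{|\xi|^2+m^2}-\sqrt{|\xi-a|^2+m^2}\bigr]\,\overline{\widehat{Q}_v(\xi-a)}\,\widehat{Q}_v(\xi)\,d\xi=(v\cdot a)\int_{\R^d}\overline{\widehat{Q}_v(\xi-a)}\,\widehat{Q}_v(\xi)\,d\xi,
\]
and then pass to the limit along $a=te_k$, $t\to 0$, using the uniform Lipschitz bound $|a_t(\xi)|\leq 1$. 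The only cosmetic differences are: (i) the paper eliminates the nonlinear term by conjugating the operator $D_v\mapsto U_aD_vU_a^{-1}$ and using the equation for $Q_a=U_aQ_v$, whereas you test the original equation against $U_aQ_v$ and subtract $\overline{F(-a)}$ --- an equivalent computation; and (ii) for the limit, the paper pairs the \emph{weak} $L^2$-convergence $h^{-1}(D_v^{he_k}-D_v)Q\rightharpoonup -\ii\Rcal_kQ+v_kQ$ against the \emph{strong} convergence $Q_a\to Q$, while you argue strong $L^2$-convergence of $a_t(\cdot)\widehat{Q}_v(\cdot-te_k)$ via your splitting. Your justification of the second summand (``dominated convergence \dots\ and $L^2$-tightness'') is a touch informal --- the clean way is to insert and subtract $[a_t-a_0]\widehat{Q}_v$, bounding one piece by $2\|\widehat{Q}_v(\cdot-te_k)-\widehat{Q}_v\|_{L^2}$ and the other by genuine dominated convergence with majorant $4|\widehat{Q}_v|^2$ --- but the argument is sound.
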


\begin{proof}
For $a \in \R^d$, we define the unitary family $U_a$ acting on $L^2(\R^d)$ by setting
\be
(U_a \phi)(x) = e^{i a \cdot x} \phi(x).
\ee
It is elementary to check that $U_a \phi \in H^{1/2}(\R^d)$ when $\phi \in H^{1/2}(\R^d)$. For notational convenience, we denote
$$
Q = Q_v, \quad D_v = \sqrt{-\DD + m^2} + i v \cdot \nabla, \quad \mbox{and $V(x) = f(|Q_v(x)|)$}.
$$
Thus the equation satisfied by $Q \in H^{1/2}(\R^d)$ can be written as
\be \label{eq:virial1}
D_v Q + V Q = 0.
\ee
For any $a \in \R^d$, the function $Q_{a} := U_a Q \in H^{1/2}(\R^d)$ is easily seen to satisfy the equation
\be \label{eq:virial2}
D_{v}^a Q_{a} + V Q_{a} = 0,
\ee
where we use that $V = U_{a} V U_a^{-1}$ (since $V$ commutes with $U_a$) and we denote
$$
D_{v}^a := U_a D_v U_a^{-1} = \mathcal{T}_a + (a \cdot v) \mathds{1} +iv \cdot \nabla.
$$
Here $\mathcal{T}_a$ is the operator given in Fourier space by
$$
\widehat{(\mathcal{T}_a f)}(\xi) = \sqrt{|\xi-a|^2 + m^2} \, \widehat{f}(\xi).
$$

Next, we use \eqref{eq:virial1} and \eqref{eq:virial2} together with the fact that $V$ is real-valued to conclude
$$
(Q_a, (D_{v}^a- D_v) Q) = (D^a_v Q_a, Q) - (Q_a, D_v Q) =-( V Q_a, Q) + ( Q_a, V Q) = 0.
$$
That is, we have
\be \label{eq:virial3}
(Q_a, (D_{v}^a- D_v) Q) = 0 \quad \mbox{for any $a \in \R$}.
\ee

Next, we fix some $k \in \{1, \ldots, d\}$ and consider $a= h e_k$ with $h \neq 0$, where  $e_k \in \R^d$ denotes the $k$-th unit vector. We claim that
\be \label{eq:virial4}
\mbox{$\frac{1}{h} ( D_v^{h e_k} - D_v) Q \rightharpoonup -i\Rcal_k Q  + v_k Q$ weakly in $L^2(\R)$ as $h \to 0$},
\ee
where $\Rcal_k$ denotes the bounded operator on $L^2(\R^d)$ given by
$$
\widehat{(\Rcal_k f)}(\xi) = \frac{-i \xi_k}{\sqrt{|\xi|^2 + m^2}} \widehat{f}(\xi).
$$
For $m=0$, we recall the operators $\{ \Rcal_k\}_{k=1}^d$ are the usual Riesz transforms in $\R^d$. To show \eqref{eq:virial4}, we observe that, for any $\phi \in L^2(\R)$ fixed,
\begin{align*}
(\phi, h^{-1} (D_v^{h e_k}-D_v) Q) & =  \int_{\R^d} \overline{\widehat{\phi}(\xi)} \left ( \frac{1}{h} \left \{ \sqrt{|\xi-h e_k|^2 + m^2} - \sqrt{|\xi|^2+m^2} \right \} +  v_k \right ) \widehat{Q}(\xi) \, d \xi  \\
& \to \int_{\R^d}  \overline{\widehat{\phi}(\xi)} \left ( \frac{-\xi_k}{\sqrt{|\xi|^2 + m^2}} + v_k \right ) \widehat{Q}(\xi) \, d \xi \quad \mbox{as} \quad h \to 0,
\end{align*}
by dominated convergence using  that $|h|^{-1} | \sqrt{|\xi - h e_k|^2 +m^2} - \sqrt{|\xi|^2+m^2} | \leq 1$ for all $\xi \in \R$ and $h \neq 0$. Thus we have shown that \eqref{eq:virial4} holds.

Finally, since $Q_a \to Q$ strongly in $L^2(\R^d)$ as $a \to 0$, we conclude from \eqref{eq:virial3} and \eqref{eq:virial4} that
\be
(Q, i \Rcal_k Q) = v_k (Q,Q)  \quad \mbox{for $k=1, \ldots, d$},
\ee
which is the claimed identity due to Plancherel's identity.
\end{proof}

\begin{prop} \label{prop:no_Qv_1}
Let $Q_v \in H^{1/2}(\R^d)$ be as in Lemma \ref{lem:virial} above. If we have
$$
\mathrm{(i)} \ \ |v| > 1 \quad \mbox{or} \quad \mathrm{(ii)} \ \ \mbox{$|v|=1$ and $m > 0$},
$$
then $Q_v \equiv 0$.
\end{prop}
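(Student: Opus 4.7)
The plan is to reduce the proposition to a single scalar identity by pairing the vector-valued virial identity of Lemma \ref{lem:virial} with the velocity $v$, and then to bound the resulting left-hand side by the symbol estimate $|\xi|/\sqrt{|\xi|^2+m^2}\le 1$.

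Concretely, write $a_k := \int_{\R^d} \frac{\xi_k}{\sqrt{|\xi|^2+m^2}}|\widehat{Q}_v(\xi)|^2\,d\xi$. Lemma \ref{lem:virial} gives $a_k = v_k\,\|\widehat{Q}_v\|_{L^2}^2$ for every $k=1,\dots,d$. Multiplying by $v_k$ and summing over $k$ yields
\begin{equation*}
\int_{\R^d} \frac{v\cdot\xi}{\sqrt{|\xi|^2+m^2}}\,|\widehat{Q}_v(\xi)|^2\,d\xi \;=\; |v|^2\,\|\widehat{Q}_v\|_{L^2}^2 .
\end{equation*}
Taking absolute values, applying $|v\cdot\xi|\le |v|\,|\xi|$ pointwise, and using the elementary bound $|\xi|/\sqrt{|\xi|^2+m^2}\le 1$, we obtain
\begin{equation*}
|v|^2\,\|\widehat{Q}_v\|_{L^2}^2 \;\le\; |v|\int_{\R^d} \frac{|\xi|}{\sqrt{|\xi|^2+m^2}}\,|\widehat{Q}_v(\xi)|^2\,d\xi \;\le\; |v|\,\|\widehat{Q}_v\|_{L^2}^2 .
\end{equation*}

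In case (i) with $|v|>1$, the inequality $|v|^2\le |v|$ fails unless $\|\widehat{Q}_v\|_{L^2}=0$, so $Q_v\equiv 0$ by Plancherel. In case (ii) with $|v|=1$ and $m>0$, all inequalities in the chain must be equalities; in particular $\frac{|\xi|}{\sqrt{|\xi|^2+m^2}}=1$ on the support of $\widehat{Q}_v$. But for $m>0$ we have the strict bound $|\xi|/\sqrt{|\xi|^2+m^2}<1$ for every $\xi\in\R^d$, which forces $\widehat{Q}_v\equiv 0$ a.e., hence $Q_v\equiv 0$.

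No step here looks like a genuine obstacle given Lemma \ref{lem:virial}: the only thing to watch is that the right-hand side of the summed identity is real (which it is, since each $a_k$ is real-valued by inspection), so one may pass to absolute values without losing a factor. The proof also explains transparently why the borderline case $|v|=1$ requires an extra ingredient when $m=0$: the symbol bound becomes sharp in the limit $|\xi|\to\infty$, so one cannot conclude merely from pointwise strictness, which is exactly the subtlety that the companion Proposition \ref{prop:no_Qv_2} will have to address in the massless case.
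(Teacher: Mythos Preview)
Your proof is correct and follows essentially the same approach as the paper: combine the virial identity from Lemma \ref{lem:virial} with the pointwise symbol bound $|\xi|/\sqrt{|\xi|^2+m^2}\le 1$, then analyze the equality case for (ii). Your $v$-weighted summation is in fact slightly more direct than the paper's component-wise Cauchy--Schwarz $|(Q_v,i\Rcal_k Q_v)|\le \|Q_v\|_{L^2}^2$, which literally yields only $|v_k|\le 1$ and implicitly relies on rotating to $v=|v|e_1$ (as is done explicitly for part (ii)) to conclude $|v|\le 1$; your argument bypasses this reduction.
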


\begin{remark}
Below we will treat the case $|v|=1$ and $m=0$, which can be solved by an ODE argument in $d=1$ dimensions.
\end{remark}

\begin{remark}
The arguments above easily carry over to nonlocal nonlinearities in the equation stated in Lemma \ref{lem:virial}. For example, let $m \geq 0$ be given and suppose that  $Q_v \in H^{1/2}(\R^3)$ solves
$$
\sqrt{-\Delta+m^2} Q_v + i v \cdot \nabla Q_v + \omega Q_v - (|x|^{-1} \ast |Q_v|^2) Q_v = 0 \quad \mbox{in $\R^3$}
$$
with some $v \in \R^3$ and $\omega \in \R$. Then we have $Q_v \equiv 0$ whenever $|v| > 1$ holds. This shows that traveling solitary waves with speeds $|v| > 1$ do no exist for the {\em boson star equation} (see \cite{FL2}).
\end{remark}

\begin{proof}
Suppose (i) holds. Since we have
\be \label{ineq:riesz}
\left | (Q_v, i \Rcal_k Q_v) \right | \leq \| \Rcal_k Q_v\|_{L^2} \| Q_v \|_{L^2}\leq \|Q_v\|_{L^2}^2,
\ee
the identity in Lemma \ref{lem:virial} shows that  $|v| > 1$ implies that $Q_v \equiv 0$ must hold.

Let us now suppose that (ii) is true. By rotational symmetry, we can assume that $v=e_1$ without loss of generality. In this case, we must have equality in \eqref{ineq:riesz} for $k=1$, which implies that $\Rcal_1 Q_v = \lambda Q_v$ for some constant $\lambda$, i.\,e.,
$$
\frac{-i \xi_1}{\sqrt{|\xi|^2 + m^2}} \widehat{Q}_v(\xi) = \lambda \widehat{Q}_v(\xi) \quad \mbox{for almost every $\xi \in \R^d$.}
$$
For strictly positive $m> 0$, this is only possible if $\widehat{Q}_v = 0$ almost everywhere. [In other words, the operator $\Rcal_1$ does not have an eigenfunction in $L^2(\R^d)$ when $m>0$.]
\end{proof}

Next, we further investigate the case $|v|=1$ and vanishing mass parameter $m=0$ in Lemma \ref{lem:virial}. To do so, we introduce the following (mild) conditions on the nonlinearity appearing in the equation in Lemma \ref{lem:virial}.

\begin{assum} \label{assum:f}
The function $f : [0,\infty) \to \R$ has the following properties.
\begin{enumerate}
\item[(i)]  There exist constants $C \geq 0$ and $\alpha \geq 0$ such that $|f(t)| \leq C e^{\alpha t^2}$ for all $t \geq 0$.
\item[(ii)] The set of zeros $f^{-1}(\{ 0 \}) \subset [0, \infty)$ is discrete.
\end{enumerate}
\end{assum}

\begin{prop} \label{prop:no_Qv_2}
Let $Q_v \in H^{1/2}(\R^d)$ be as in Lemma \ref{lem:virial} above, where in addition we suppose that $f: [0,\infty) \to \R$  satisfies Assumption $\ref{assum:f}$. If $|v|=1$, $m=0$ and $d=1$, then $Q_v \equiv 0$.
\end{prop}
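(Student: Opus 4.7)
The plan is to exploit the virial identity from Lemma~\ref{lem:virial} to force $\widehat{Q}_v$ to be supported on a half-line, after which the profile equation collapses to a purely algebraic constraint and Hardy-space uniqueness delivers the contradiction. With $m=0$, $d=1$, and $v=\pm 1$, Lemma~\ref{lem:virial} specializes to
$$
\int_{\R} \operatorname{sgn}(\xi)\,|\widehat{Q}_v(\xi)|^2\,d\xi \;=\; v \int_{\R} |\widehat{Q}_v(\xi)|^2\,d\xi,
$$
which is the equality case of the trivial bound $|\operatorname{sgn}(\xi)|\le 1$. Hence $\widehat{Q}_v$ must be supported in $[0,\infty)$ if $v=1$ and in $(-\infty,0]$ if $v=-1$. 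Since the substitution $Q_v\mapsto \overline{Q_v}$ flips the sign of $v$ in the profile equation, I may assume without loss of generality that $v=1$.

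Next, I would observe that the Fourier multiplier $\hD+i\pt_x$ has symbol $|\xi|-\xi$, which vanishes identically on $[0,\infty)$. Since $\widehat{Q}_v$ is supported there, $(\hD+i\pt_x)Q_v = 0$ strongly in $L^2(\R)$. Substituting into the profile equation reduces it to the pointwise a.e.\ identity
$$
f(|Q_v(x)|)\,Q_v(x) = 0,
$$
so for a.e.\ $x\in\R$ either $Q_v(x)=0$ or $|Q_v(x)|\in S := f^{-1}(\{0\})\cap(0,\infty)$.

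The decisive step is Hardy-space uniqueness. Since $\widehat{Q}_v\in L^2(\R)$ is supported in $[0,\infty)$, by Paley--Wiener $Q_v$ is the nontangential boundary value of a function $F$ belonging to the Hardy space $H^2$ of the upper half-plane. Arguing by contradiction, suppose $Q_v\not\equiv 0$; then $F\not\equiv 0$, and the classical fact that $\log|F^*|\in L^1(\R,\tfrac{dx}{1+x^2})$ for every nontrivial $H^2$-function forces $Q_v(x)\ne 0$ for a.e.\ $x\in\R$. Hence $|Q_v(x)|\in S$ for a.e.\ $x\in\R$. By Assumption~\ref{assum:f}(ii) the set $S$ is discrete in $[0,\infty)$, so (when nonempty) its infimum $c:=\inf S$ is strictly positive. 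This produces $|Q_v(x)|^2\ge c^2$ a.e., contradicting $Q_v\in L^2(\R)$. Therefore $Q_v\equiv 0$.

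The main obstacle lies in the step connecting the pointwise alternative ``$Q_v=0$ or $|Q_v|\in S$'' to a global constraint on $|Q_v|$: without invoking Hardy-space structure, the zero set of $Q_v$ could a priori have full measure without forcing $Q_v\equiv 0$ at the level of the Fourier-side identity alone, so the Paley--Wiener extension together with the non-vanishing-a.e.\ property of nontrivial $H^2$-functions is what genuinely uses the one-dimensional character of the problem. Note that Assumption~\ref{assum:f}(i) on the growth of $f$ plays no role in this argument; only the discreteness of $f^{-1}(\{0\})$ is needed, read in the analysts' sense of having no accumulation points in $[0,\infty)$, which guarantees $\inf S>0$ whenever $S\ne\varnothing$.
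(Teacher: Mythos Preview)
Your proof is correct and takes a genuinely different route from the paper's. After obtaining equality in the Cauchy--Schwarz bound, the paper only records that $\widehat{Q}_v$ is supported on a half-line and then analyzes the ODE $-i\alpha\,Q_v' + f(|Q_v|)Q_v = 0$ with $\alpha\in\{0,2\}$ treated as two separate subcases; the $\alpha=2$ subcase invokes a Moser--Trudinger inequality (Lemma~\ref{lem:moser}, which is where Assumption~\ref{assum:f}(i) enters) to gain $W^{1,1}_{\mathrm{loc}}$ regularity and then shows $|Q_v|^2$ is constant. You instead use the \emph{full} virial identity, not merely equality in Cauchy--Schwarz, to match the sign of $v$ with the support side of $\widehat{Q}_v$, so that the symbol $|\xi|-v\xi$ vanishes identically on $\widehat{Q}_v$. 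This reveals that the $\alpha=2$ subcase is in fact vacuous and collapses the problem directly to the algebraic constraint $f(|Q_v|)Q_v=0$. For that constraint the paper appeals to Lemma~\ref{lem:const} (an $H^{1/2}$-seminorm argument via Brezis's characterization of constants), whereas you exploit the Paley--Wiener/Hardy-space fact that a nontrivial $H^2_+$ boundary value is nonzero a.e., reducing to $|Q_v|\ge \inf S>0$ and contradicting $Q_v\in L^2$. Your approach thus dispenses with both auxiliary lemmas and with Assumption~\ref{assum:f}(i), at the price of leaning on Hardy-space structure that is specific to $d=1$; the paper's Lemma~\ref{lem:const} route is dimension-agnostic and would survive in settings without one-sided Fourier support. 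One caveat: both arguments implicitly read ``discrete'' as ``closed discrete in $[0,\infty)$'' (no accumulation at $0$)---the paper needs this so that $D=f^{-1}(\{0\})\cup\{0\}$ is still discrete in Lemma~\ref{lem:const}, and you need it for $\inf S>0$---so your explicit flagging of this interpretation is appropriate.
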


\begin{proof}
Recall that we must have equality in \eqref{ineq:riesz}, which in $d=1$ dimensions implies that $\Rcal_1 Q_v = \lambda Q_v$ for some constant $\lambda \in \C$, where it is easy to seen that $\lambda \in \{\pm i\}$. That is, the function $Q_v \in H^{1/2}(\R)$ has to be an eigenfunction of the Hilbert transform $\Rcal_1=H$ on the real line.  Thus one of the following cases is true.
\begin{itemize}
\item Case 1: $\mathrm{supp} \, \widehat{Q}_v \subset \R_{\geq 0}$ and $\lambda = -i$.
\item Case 2: $\mathrm{supp} \, \widehat{Q}_v \subset \R_{\leq 0}$ and $\lambda = +i$.
\end{itemize}
Suppose we are in Case 1. Then we have $\sqrt{-\DD} Q_v = -i \pt_x Q_v$. Hence $Q_v \in H^{1/2}(\R)$ is a weak solution of the ordinary differential equation
\be \label{eq:ODE}
-i \alpha \frac{d Q_v}{dx} + f(|Q_v|) Q_v = 0 \quad \mbox{in $\mathcal{D}'(\R)$},
\ee
with the real constant
$$
\alpha = 1+v \in \{0,2\},
$$
since, by assumption, we must  have $v \in \{ \pm 1 \}$.

If $\alpha = 0$, then \eqref{eq:ODE} implies that
$$
f(|Q_v|) Q_v = 0 \quad \mbox{almost everywhere}.
$$
Hence $Q_v \in D$ almost everywhere with the $D = f^{-1}( \{ 0 \} ) \cup \{ 0 \}$. Since $D$ must be a discrete set by Assumption \ref{assum:f} part (ii), it follows that $Q_v \equiv 0$ from Lemma \ref{lem:const} below.

Suppose now that $\alpha = 2$ holds.  We note that the growth condition on $f$ in Assumption \ref{assum:f} part (i) together with $Q_v \in H^{1/2}(\R)$ implies that $f(|Q|_v) Q_v \in L^1_{\mathrm{loc}}(\R)$ by a Moser--Trudinger type inequality; see Lemma \ref{lem:moser} below. Thus it follows from \eqref{eq:ODE} that $Q_v \in W^{1,1}_{\mathrm{loc}}(\R)$ and hence $(|Q_v|^2)' = Q_v' \overline{Q}_v + Q_v \overline{Q}_v'$ in the sense of weak derivatives. However, by using that $f$ is real-valued, we readily check from \eqref{eq:ODE} that
$$
\frac{d}{dx} (|Q_v|^2) = \frac{d Q_v}{dx} \overline{Q}_v + Q_v \frac{d \overline{Q}_v}{dx} = \frac{i}{2} f(|Q_v|)|Q_v|^2 - \frac{i}{2} f(|Q_v|) |Q_v|^2 \equiv 0 \quad \mbox{in $\mathcal{D}'(\R)$}.
$$
Thus we find that $x \mapsto |Q_v(x)|^2$ is a constant function on $\R$. But this implies that $Q_v \equiv 0$, since we have $Q_v \in L^2(\R)$.

By analogous arguments, we deduce that $Q_v \equiv 0$ also holds in Case 2.
\end{proof}

\section{Traveling Solitary Waves with Arbitrarily Small Energy}

\label{sec:no_scatt}

The goal of the present section is to proof of Theorem \ref{thm:no_scat}.

\subsection{Preliminaries}
We first collect some preliminary facts about traveling solitary wave whose profiles $Q_{v} \in H^{1/2}(\R^d)$ are obtained as optimizers for a suitable Gagliardo--Nirenberg--Sobolev (GNS) type interpolation inequality involving the velocity $v \in \R^d$ as a parameter.

Let $d \in \N$ be given and assume that $1 < p < p_*$. For any velocity $v \in \R^d$ with  $|v| < 1$, we define the quadratic form
\be \label{def:Tv}
T_v : \dot{H}^{1/2}(\R^d)\to \R,\quad u \mapsto T_v(u) = \int_{\R^d} (|\xi|- v \cdot \xi) |\widehat{u}(\xi)|^2 \, d \xi.
\ee
Clearly, we have the bounds
\be \label{ineq:Tv}
(1-|v|) \| u \|_{\dot{H}^{1/2}}^2 \leq T_v(u) \leq (1+|v|) \| u \|_{\dot{H}^{1/2}}^2,
\ee
which in particular show that $\sqrt{T_v(u)}$ defines a norm equivalent to $\| u \|_{\dot{H}^{1/2}}$.  Next, we recall that
\be
M(u) = \int_{\R^d} |u|^2 \, dx
\ee
denotes the squared $L^2$-norm of $u \in L^2(\R^d)$. We now have the following \emph{Gagliardo--Nirenberg--Sobolev type} interpolation inequality
\be \tag{GNS$_v$} \label{ineq:GNS_v}
\int_{\R^d} |u|^{p+1} \, dx \leq \Copt_{v,d,p} (T_v(u))^{\frac{d(p-1)}{2}} M(u)^{\frac{p+1}{2} - \frac{d(p-1)}{2}}
\ee
valid for all $u \in H^{1/2}(\R^d)$ with some constant $\Copt_{v,d,p} > 0$.

\begin{prop} \label{prop:GNS}
Let $d\in \N$ and $1 < p < p_*$. Then for any $v \in \R^d$ with $|v| < 1$, there exists an optimal constant $\Copt_{v,d,p} > 0$ for inequality \eqref{ineq:GNS_v}. Moreover, there exists an optimizer $Q_{v} \in H^{1/2}(\R^d) \setminus \{ 0 \}$ for \eqref{ineq:GNS_v} and it satisfies
$$
\hD \, Q_v + i v \cdot \nabla Q_v + Q_v -  |Q_v|^{p-1} Q_v = 0.
$$
\end{prop}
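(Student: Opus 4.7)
The plan is to establish the inequality and then obtain an optimizer by a concentration–compactness argument, finally extracting the claimed PDE via Euler–Lagrange and a scaling normalization.

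First, the existence of a finite optimal constant $\Copt_{v,d,p}$ follows from the two–sided bound \eqref{ineq:Tv}, which tells us that $T_v^{1/2}$ is equivalent to $\|\cdot\|_{\dot H^{1/2}}$ for $|v|<1$. Combining the Sobolev embedding $\dot H^{1/2}(\R^d)\hookrightarrow L^{2d/(d-1)}$ for $d\ge2$ (together with $H^{1/2}\hookrightarrow L^q$ for every $2\le q<\infty$ if $d=1$) with Hölder interpolation between $L^2$ and $L^{2d/(d-1)}$ gives \eqref{ineq:GNS_v} with exponents determined by homogeneity, and then $\Copt_{v,d,p}:=\sup J_v<\infty$ where
\be
J_v(u):=\frac{\int_{\R^d}|u|^{p+1}\,dx}{T_v(u)^{\frac{d(p-1)}{2}}M(u)^{\frac{p+1}{2}-\frac{d(p-1)}{2}}}.
\ee
A direct computation with the scalings $u(x)\mapsto\lambda u(\mu x)$ shows that $J_v$ is invariant, so we may pick a maximizing sequence $\{u_n\}\subset H^{1/2}(\R^d)\setminus\{0\}$ normalized to $T_v(u_n)=M(u_n)=1$; in particular $\{u_n\}$ is bounded in $H^{1/2}(\R^d)$.

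The key step is compactness up to translation. Because $p+1<2d/(d-1)$ (for $d\ge2$; $p+1<\infty$ if $d=1$), one has the Lions–type concentration lemma in $H^{1/2}$: if $\sup_{y\in\R^d}\int_{B(y,1)}|u_n|^2\,dx\to 0$, then $u_n\to 0$ strongly in $L^{p+1}$. Since $\int|u_n|^{p+1}\to\Copt_{v,d,p}>0$, vanishing is excluded. Therefore we may pick translates $x_n\in\R^d$ such that $\widetilde u_n:=u_n(\cdot-x_n)\rightharpoonup Q_v$ in $H^{1/2}(\R^d)$ with $Q_v\not\equiv 0$; note that $J_v$ is translation invariant since $T_v$ and $M$ are. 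Writing $r_n:=\widetilde u_n-Q_v\rightharpoonup 0$, Brezis–Lieb gives
\be
\int|\widetilde u_n|^{p+1}=\int|Q_v|^{p+1}+\int|r_n|^{p+1}+o(1),
\ee
while the quadratic forms split exactly: $T_v(\widetilde u_n)=T_v(Q_v)+T_v(r_n)+o(1)$ and $M(\widetilde u_n)=M(Q_v)+M(r_n)+o(1)$. Setting $a=T_v(Q_v)$, $b=M(Q_v)$, $\alpha=\lim T_v(r_n)$, $\beta=\lim M(r_n)$, the identities $a+\alpha=1=b+\beta$ combined with the inequality applied to both $Q_v$ and $r_n$ force
\be
1=(a+\alpha)^{\frac{d(p-1)}{2}}(b+\beta)^{\frac{p+1}{2}-\frac{d(p-1)}{2}}\le a^{\frac{d(p-1)}{2}}b^{\frac{p+1}{2}-\frac{d(p-1)}{2}}+\alpha^{\frac{d(p-1)}{2}}\beta^{\frac{p+1}{2}-\frac{d(p-1)}{2}}.
\ee
Because the two exponents sum to $(p+1)/2>1$, the strict super–additivity $(a+\alpha)^N(b+\beta)^L>a^Nb^L+\alpha^N\beta^L$ holds unless one of the pairs $(a,b)$ or $(\alpha,\beta)$ is trivial; since $Q_v\not\equiv 0$ we conclude $\alpha=\beta=0$, i.e.~$r_n\to 0$ strongly in both $\dot H^{1/2}$ and $L^2$, hence in $L^{p+1}$. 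This proves that $Q_v$ attains $\Copt_{v,d,p}$. I expect verifying this strict super–additivity (and the companion Brezis–Lieb splitting for $T_v$, which is clean because $T_v$ is a quadratic form and $r_n\rightharpoonup 0$) to be the main technical obstacle.

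It remains to derive the Euler–Lagrange equation. Differentiating the ratio $J_v$ at the maximizer $Q_v$ along an arbitrary test direction $\varphi\in H^{1/2}(\R^d)$ and taking real parts yields
\be
\sqrt{-\DD}\,Q_v+iv\cdot\nabla Q_v+\omega\,Q_v=\mu\,|Q_v|^{p-1}Q_v
\ee
for some constants $\omega,\mu>0$ depending on $T_v(Q_v)$, $M(Q_v)$ and $\int|Q_v|^{p+1}$; positivity of these constants is immediate from the positivity of $T_v$ on the half–space $|v|<1$. Finally, applying the scaling $Q_v(x)\mapsto \lambda Q_v(\mu_0 x)$ with appropriate $\lambda,\mu_0>0$ (which preserves $J_v$) normalizes $\omega=\mu=1$ and produces the stated equation.
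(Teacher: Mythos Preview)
Your proof is correct and follows essentially the same route as the paper, which merely sketches the concentration--compactness argument for the Weinstein-type functional $\mathcal{W}_{v,d,p}$ (citing \cite{KLR} for the case $d=1$, $p=3$) and notes that the Euler--Lagrange equation normalizes under the rescaling $Q_v(x)\mapsto aQ_v(bx)$. The strict super-additivity you flag as the main obstacle does hold: after normalizing $a+\alpha=b+\beta=1$, Young's inequality $a^N b^L\le \tfrac{N}{N+L}a^{N+L}+\tfrac{L}{N+L}b^{N+L}$ combined with $x^{N+L}\le x$ on $[0,1]$ (since $N+L=(p+1)/2>1$) gives $a^N b^L+\alpha^N\beta^L\le 1$, with equality only when $(a,b)\in\{(0,0),(1,1)\}$.
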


\begin{remark}
For zero velocity $v=0$, uniqueness of optimizers $Q_{0} \in H^{1/2}(\R^d)$ (modulo translation and phase) was shown in \cite{FrL,FrLS}. For the special case $d=1$ and $p=2$, we also have the explicit expression
$$
Q_0(x) = \frac{2}{1+x^2},
$$
which also arises as the (unique up to symmetries) solitary wave profile for the {\em Benjamin--Ono equation}; see \cite{AT}.
\end{remark}

\begin{remark}
For $0 <|v| < 1$, uniqueness of optimizers $Q_v \in H^{1/2}(\R^d)$ is an interesting open problem (except, of course, for small velocities $|v| \ll 1$ where perturbative arguments could be used.) For the special case $d=1$ and $p=3$, the limiting regime $|v| \to 1^-$ connects the optimizers $Q_v(\R)$ (in a suitable scaling) to solitons of the completely integrable {\em cubic Szeg\"o equation} on $\R$; see \cite{GLPR} for a detailed study in connection with turbulence phenomena.
\end{remark}

\begin{proof}[Proof of Proposition \ref{prop:GNS}]
The existence of optimizers can be deduced from standard variational methods applied to a suitable modification of the {\em ``Weinstein functional''} (see \cite{We}). In our case, the functional reads
\be
\mathcal{W}_{v,d,p}(u) = \frac{\displaystyle \int_{\R^d} |u|^{p+1}\, dx }{(T_v(u))^{\frac{d(p-1)}{2}} M(u)^{\frac{p+1}{2} - \frac{d(p-1)}{2}}}
\ee
defined for functions $u \in H^{1/2}(\R^d) \setminus \{ 0 \}$. The optimal constant for the corresponding interpolation estimate \eqref{ineq:GNS_v} is then given by the variational problem
\be
\Copt_{v,d,p} = \sup_{u \in H^{1/2}(\R^d) \setminus \{ 0 \}} \mathcal{W}_{v,d,p}(u) .
\ee
That this supremum is indeed  attained can be inferred from concentration-compactness methods; see, e.\,g., \cite{KLR} for $d=1$ and $p=3$. The general case can be handed in a similar fashion with straightforward modifications. Finally, it is elementary to check that any optimizer $Q_v \in H^{1/2}(\R^d)$ satisfies the equation in Proposition \ref{prop:GNS} -- after a suitable rescaling $Q_v(x) \mapsto a Q_v(bx)$ with some constants $a, b > 0$.
\end{proof}

For the rest of this section, let us write
$$
\Copt_v = \Copt_{v,d,p}
$$
for the optimal constants given by Proposition \ref{prop:GNS} above. Moreover, we shall use $X \lesssim Y$ to denote that $X \leq C Y$, where $C>0$ is a constant that only depends on $d$ and $p$. As usual, we write $X \sim Y$ to mean that both $X \lesssim Y$ and $Y \lesssim X$ hold.

\begin{lem} \label{lem:Cv_bound}
For $|v| < 1$, we have the estimate
$$
\Copt_{v} \sim (1-|v|)^{-\frac{d(p-1)}{2}}.
$$
\end{lem}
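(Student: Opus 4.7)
The plan is to establish matching upper and lower bounds on $\Copt_v$ as $|v| \to 1^-$, both of order $(1-|v|)^{-d(p-1)/2}$.

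For the upper bound, I would use the elementary pointwise Fourier estimate $|\xi| - v\cdot\xi \geq (1-|v|)|\xi|$, valid for $|v| < 1$ and every $\xi \in \R^d$ by Cauchy--Schwarz. Integrating against $|\widehat u(\xi)|^2$ gives $T_v(u) \geq (1-|v|)\, T_0(u) = (1-|v|)\,\|u\|_{\dot H^{1/2}}^2$. Substituting into the Weinstein functional and using $\mathcal{W}_{0,d,p}(u) \leq \Copt_{0,d,p}$ yields
$$
\mathcal{W}_{v,d,p}(u)\; \leq\; (1-|v|)^{-d(p-1)/2}\,\mathcal{W}_{0,d,p}(u)\; \leq\; (1-|v|)^{-d(p-1)/2}\,\Copt_{0,d,p},
$$
and taking the supremum over nonzero $u \in H^{1/2}(\R^d)$ gives $\Copt_v \lesssim (1-|v|)^{-d(p-1)/2}$.

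For the matching lower bound, I would construct an explicit family of trial functions $u_v \in H^{1/2}(\R^d)$ whose Fourier transforms concentrate along the degeneracy ray $\R_+ \hat v$ of the symbol $|\xi|-v\cdot\xi$, where $\hat v = v/|v|$. Without loss of generality take $v = |v|\,e_1$ and write $\eps = 1-|v|$. The canonical construction is an anisotropically rescaled modulation,
$$
u_v(x) = e^{i k_0 x_1}\,\phi(\beta x_1,\,\gamma x_\perp),
$$
where $\phi$ is a fixed Schwartz function with $\widehat\phi \in C_c^\infty$ supported in a small ball about the origin, and the positive parameters $k_0, \beta, \gamma$ are chosen as suitable powers of $\eps^{-1}$. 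The transform $\widehat{u_v}$ then lives in an anisotropic tube of dimensions roughly $\beta\times\gamma^{d-1}$ centered at $k_0 e_1$, on which Taylor expansion gives the key profile $|\xi|-|v|\xi_1 \approx \eps\,\xi_1 + |\xi_\perp|^2/(2\xi_1)$. A direct change of variables yields $M(u_v) \sim (\beta\gamma^{d-1})^{-1}$ and $\int|u_v|^{p+1}\,dx \sim (\beta\gamma^{d-1})^{-1}$, while $T_v(u_v) \sim (\beta\gamma^{d-1})^{-1}[\eps\,k_0 + \gamma^2/k_0]$; balancing the two parenthesized contributions forces $k_0 \sim \gamma/\sqrt{\eps}$, and a careful bookkeeping of the $\eps$-powers then dictates the optimal choice of $\beta, \gamma$ and yields the desired rate $\mathcal{W}_{v,d,p}(u_v) \gtrsim \eps^{-d(p-1)/2}$.

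The main technical obstacle lies in higher dimensions $d \geq 2$. In $d=1$ the argument collapses to the classical Hardy-space construction: if $\widehat\phi \subset \R_+$ and $k_0 > \beta$, then $\widehat{u_v} \subset \R_+$ and the symbol $|\xi|-v\xi$ collapses to $(1-v)\xi$ \emph{exactly} on this set, so $T_v(u_v) = (1-v)\|u_v\|_{\dot H^{1/2}}^2$ identically and the lower bound follows without any balancing. For $d \geq 2$ the degeneracy set of $|\xi|-v\cdot\xi$ has codimension $d-1$, so no nontrivial $L^2$-function can be concentrated exactly on it; the anisotropic Fourier tube above merely \emph{approximates} such concentration, and the hard part is to select the scales $\beta = \beta(\eps)$, $\gamma = \gamma(\eps)$, consistent with the Taylor-expansion constraints $k_0 \gg \beta$ and $k_0 \gg \gamma$, so that all three ingredients $M(u_v)$, $\int|u_v|^{p+1}\,dx$, and $T_v(u_v)$ contribute powers of $\eps$ that combine to yield precisely the sharp exponent $-d(p-1)/2$ predicted by the upper bound.
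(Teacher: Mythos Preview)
Your upper bound is identical to the paper's and correct; so is your $d=1$ lower bound, which is exactly the paper's Hardy-space argument (a fixed $g\in H^{1/2}_+(\R)$ makes $T_v(g)=(1-v)\|g\|_{\dot H^{1/2}}^2$ identically).

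The gap is in the $d \geq 2$ lower bound: your high-frequency bump does \emph{not} reach the exponent $-d(p-1)/2$. Carrying out your own bookkeeping, with $M(u_v) \sim \|u_v\|_{L^{p+1}}^{p+1} \sim (\beta\gamma^{d-1})^{-1}$ and $T_v(u_v) \sim (\beta\gamma^{d-1})^{-1}\bigl[\eps k_0 + \gamma^2/k_0\bigr]$, the quotient is
\[
\mathcal{W}_{v,d,p}(u_v)\;\sim\;\Bigl(\frac{\beta\,\gamma^{d-1}}{(\eps k_0 + \gamma^2/k_0)^{\,d}}\Bigr)^{(p-1)/2}.
\]
After balancing $k_0 \sim \gamma/\sqrt{\eps}$ the bracketed ratio becomes $\beta/(\eps^{d/2}\gamma)$, so hitting $\eps^{-d}$ would require $\beta/\gamma \gtrsim \eps^{-d/2}$. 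But your own constraint $k_0 \gg \beta$ together with $k_0 \sim \gamma/\sqrt{\eps}$ forces $\beta/\gamma \ll \eps^{-1/2}$, and the two are incompatible for every $d\ge 2$. The optimal output of your anisotropic tube is only $\mathcal{W}_{v}(u_v) \sim \eps^{-(d+1)(p-1)/4}$, strictly weaker than the target $\eps^{-d(p-1)/2}$. The obstruction is structural: any bump shifted to a single high-frequency center $k_0 e_1$ incurs an unavoidable cost $\eps k_0$ in the symbol, and this grows with the shift.

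The paper takes a different route and avoids the high-frequency shift altogether. Its trial function has the Fourier product form $\widehat u_\eps(\xi)=\widehat g(\xi_1)\,\phi_\eps(\xi')$, where $\widehat g$ is a \emph{fixed} (unshifted) function supported on $[0,\infty)$ and $|\phi_\eps|^2$ is an approximate identity concentrating at $\xi'=0$. The elementary inequality
\[
\sqrt{\xi_1^2+|\xi'|^2}-|v|\,\xi_1 \;\le\; (1-|v|)\,|\xi| + |\xi'| \qquad (\xi_1\ge 0)
\]
then gives $T_v(u_\eps)\le (1-|v|)\|u_\eps\|_{\dot H^{1/2}}^2 + o_\eps(1)$, and choosing $\eps$ small the paper reads off $T_v(u_{\eps_0})\le 2(1-|v|)\|u_{\eps_0}\|_{\dot H^{1/2}}^2$ and hence the lower bound on $\Copt_v$. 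The essential difference from your ansatz is that the $\xi_1$-profile is kept fixed rather than pushed to scale $\eps^{-1/2}$, so no $\eps k_0$-type penalty appears.
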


\begin{proof}
First, we show the universal upper bound
$$
\Copt_{v} \lesssim (1-|v|)^{-\frac{d(p-1)}{2}}.
$$
Indeed, recall that $T_v(u) \geq (1-v) \| |\nabla|^{1/2} u \|_{L^2}^2$. Thus if $\Copt_0>0$ denotes the optimal constant for \eqref{ineq:GNS_v} with $v=0$, we deduce that
\begin{align*}
\left ( \int_{\R^d} |u|^{p+1} \, dx \right ) & \leq \Copt_0 \left ( \int_{\R^d} | |\nabla|^{1/2} u |^2  \,dx \right )^{\frac{d(p-1)}{2}}  ( M(u) )^{\frac{p+1}{2}-\frac{d(p-1)}{2}} \\
& \leq \Copt_0 (1-|v|)^{-\frac{d(p-1)}{2}} ( T_v(u) )^{\frac{d(p-1)}{2}} (M(u) )^{\frac{p+1}{2}-\frac{d(p-1)}{2}},
\end{align*}
which proves the  universal upper bound $\Copt_v \leq \Copt_0(1-|v|)^{-\frac{d(p-1)}{2}}$.

To show the universal lower bound
$$
\Copt_v \gtrsim (1-|v|)^{-\frac{d(p-1)}{2}},
$$
we argue as follows. We remark that the one-dimensional case $d=1$ is particularly simple to deal with (and therefore the following discussion could be simplified in this case).

By a suitable (possibly improper) rotation in $\R^d$, we can henceforth assume that
$$
v = |v| e_1 = (|v|, 0, \ldots, 0) \in \R^d
$$
points in (positive) $x_1$-direction. Next, we fix a function $g \in H^{1/2}_+(\R)=\Pi_+ (H^{1/2}(\R)) = \{ f \in H^{1/2}(\R) : \mathrm{supp} \, \widehat{f} \subset \R_+ \}$, where we assume $g \not \equiv 0$. For $\eps > 0$ given and $d \geq 2$,  we let $\phi_\eps : \R^{d-1} \to \R$ be given by
$$
\phi_\eps(y) = c_\eps e^{-\eps |y|^2} \quad \mbox{for $y \in \R^{d-1}$},
$$
where $c_\eps > 0$ is chosen such that the normalization $\int_{\R^{d-1}} |\phi_\eps|^2 \, dx  = 1$ holds. For $d=1$, we just set $\phi_\eps \equiv 1$ in what follows.

Now, let $u_\eps \in H^{1/2}(\R^d)$ be given by its Fourier transform with
$$
\widehat{u}_\eps(\xi) = g(\xi_1) \phi_\eps(\xi_2, \ldots, \xi_d).
$$
Since the functions $\{ |\phi_\eps(\cdot)|^2\}_{\eps > 0}$ form an approximate identity on $\R^{d-1}$, we find
\begin{align*}
\| \widehat{u}_\eps \|_{\dot{H}^{1/2}(\R^d)}^2 & = \int_{\R^d} \sqrt{\xi_1^2 + \ldots + \xi_d^2} \, |\widehat{g}(\xi)|^2 |\phi_\eps(\xi_2, \ldots, \xi_d)|^2 \, d\xi \\
& \to \int_{\R} |\xi_1| |\widehat{g}(\xi_1)|^2 \, d \xi_1 = \| g \|_{\dot{H}^{1/2}(\R)}^2 \quad \mbox{as} \quad  \eps \to 0.
\end{align*}

Next, by using that $|v| < 1$, we find the elementary chain of inequalities for nonnegative numbers $\alpha, \beta \geq 0$ given by
$$
\sqrt{\alpha^2 + \beta^2} - |v| \alpha \leq \alpha + \beta - |v| \alpha \leq (1-|v|) \alpha + \beta \leq (1-|v|) \sqrt{\alpha^2+\beta^2} + \beta.
$$
Since $\mathrm{supp} \, \widehat{g} \subset [0,\infty)$ by assumption and hence $(|\xi|-|v| \xi_1) |\widehat{u}_\eps(\xi)|^2 = (|\xi|-|v| |\xi_1|) |\widehat{u}_\eps(\xi)|^2$, we can apply the elementary inequalities from above to $|\alpha| = |\xi_1|$ and $\beta = \sqrt{\xi_2^2 + \ldots + \xi_d^2}$. This yields
\begin{align*}
T_v(u_\eps) & = \int_{[0,\infty) \times \R^{d-1}} \sqrt{\xi_1^2+ \ldots + \xi_d^2} \, |\widehat{g}(\xi)|^2 |\phi_\eps(\xi_2, \ldots, \xi_d)|^2 \, d\xi_1 \, d \xi_2 \ldots d \xi_d \\
& \leq (1-|v|) \int_{\R^d} \sqrt{\xi_1^2 + \ldots + \xi_d^2} \, |\widehat{u}_\eps(\xi)|^2 \, d\xi \\
&  \quad + \| \widehat{g} \|_{L^2(\R)}^2 \int_{\R^{d-1}} \sqrt{\xi_2^2 + \ldots + \xi_d^2} \, |\phi_\eps(\xi_2, \ldots, \xi_d)|^2 \, d \xi_2 \ldots d \xi_d \\
& = (1-|v| ) \| u_\eps\|_{\dot{H}^{1/2}}^2 +  o(1) \quad \mbox{with $o(1) \to 0$ as $\eps \to 0$.}
\end{align*}
Since $\|  u_\eps \|_{\dot{H}^{1/2}(\R^d)} \to \|  g \|_{\dot{H}^{1/2}(\R)} \neq 0$ as $\eps \to 0$, we can thus find some sufficiently small $\eps_0 > 0$ such that
$$
T_v(u_{\eps_0}) \leq 2 (1-|v|) \|  u_{\eps_0} \|_{\dot{H}^{1/2}}^2.
$$
Therefore we obtain
$$
\Copt_v \gtrsim (1-|v|)^{-\frac{d(p-1)}{2}} \frac{ \| u_{\eps_0} \|_{L^p}^{p+1} }{ \| |\nabla|^{1/2} u_{\eps_0} \|_{L^2}^{d(p-1)} \| u_{\eps_0} \|_{L^2}^{(p+1)-d(p-1)}}  \gtrsim K (1-|v|)^{-\frac{d(p-1)}{2}},
$$
with some constant $K=K(u_{\eps_0})>0$. This completes the proof of Lemma \ref{lem:Cv_bound}.
\end{proof}

As a consequence of Lemma \ref{lem:Cv_bound} we obtain the following estimates.

\begin{lem} \label{lem:Qv_bounds}
For $|v| < 1$ and $Q_v \in H^{1/2}(\R^d)$  as in Proposition \ref{prop:GNS}, we have the bounds
$$
\| Q_v \|_{L^2}^2  \sim (1-|v|)^{d}  \quad  \mbox{and} \quad \| Q_v \|_{\dot{H}^{1/2}}^2 \lesssim (1-|v|)^{d-1}.
$$
\end{lem}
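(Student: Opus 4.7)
\emph{Strategy.} The plan is to extract two algebraic identities between $T := T_v(Q_v)$, $M := M(Q_v)$ and $N := \int_{\R^d}|Q_v|^{p+1}\,dx$, and then feed them into Lemma \ref{lem:Cv_bound}. Write $\alpha := d(p-1)/2$ and $\beta := (p+1)/2 - \alpha$; note $\alpha, \beta > 0$ (the latter because $p < p_*$) and $\alpha + \beta = (p+1)/2$. The first identity comes from pairing the profile equation $\hD Q_v + iv\cdot\nabla Q_v + Q_v = |Q_v|^{p-1} Q_v$ with $\bar Q_v$ and integrating: since the Fourier symbol $|\xi| - v\cdot\xi$ of $\hD + iv\cdot\nabla$ is real, this yields the relation
\[
T + M = N.
\]
For the second identity I would exploit that $Q_v$ maximizes the Weinstein functional $\mathcal{W}_{v,d,p}(u) = N(u)/(T_v(u)^\alpha M(u)^\beta)$. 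A direct Lagrange-multiplier computation at the optimizer gives
\[
\frac{2\alpha N}{T}(\hD + iv\cdot\nabla) Q_v + \frac{2\beta N}{M} Q_v = (p+1)|Q_v|^{p-1} Q_v,
\]
and subtracting $(p+1)$ times the profile equation leaves
\[
\Bigl(\tfrac{2\alpha N}{T} - (p+1)\Bigr)(\hD + iv\cdot\nabla) Q_v + \Bigl(\tfrac{2\beta N}{M} - (p+1)\Bigr) Q_v = 0.
\]
Provided $(\hD+iv\cdot\nabla)Q_v$ is not a scalar multiple of $Q_v$, both coefficients must vanish, which forces the key Pohozaev-type ratio $T/M = \alpha/\beta$.

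\emph{Closing the loop.} Substituting $T = (\alpha/\beta) M$ and $N = T + M = (1+\alpha/\beta) M$ back into $\Copt_v = \mathcal{W}_{v,d,p}(Q_v) = N/(T^\alpha M^\beta)$ and simplifying with $\alpha + \beta = (p+1)/2$, the $M$-dependence collapses to
\[
\Copt_v = K_{d,p}\, M^{-(p-1)/2}, \qquad K_{d,p} := \frac{1+\alpha/\beta}{(\alpha/\beta)^\alpha} > 0.
\]
Hence $M(Q_v) \sim \Copt_v^{-2/(p-1)}$, and combining with the estimate $\Copt_v \sim (1-|v|)^{-d(p-1)/2}$ from Lemma \ref{lem:Cv_bound} gives the first claim $M(Q_v) \sim (1-|v|)^d$. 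The second claim is then immediate: $T = (\alpha/\beta) M \sim (1-|v|)^d$, and the lower bound $T_v(Q_v) \geq (1-|v|)\|Q_v\|_{\dot H^{1/2}}^2$ from \eqref{ineq:Tv} yields $\|Q_v\|_{\dot H^{1/2}}^2 \leq T/(1-|v|) \lesssim (1-|v|)^{d-1}$.

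\emph{Main obstacle.} The delicate point is the Pohozaev-type identity $T/M = \alpha/\beta$. A formal integration-by-parts of the profile equation against $x\cdot\nabla\bar Q_v$ would be blocked by the nonlocality of $\hD + iv\cdot\nabla$ and the absence of a priori decay on $xQ_v$ (the same difficulty encountered in Section \ref{sec:non_exist}), so the variational route through the Weinstein functional is preferable. The remaining input needed is the linear independence of $(\hD + iv\cdot\nabla)Q_v$ and $Q_v$ in $L^2$, which I would justify spectrally: for $|v| < 1$, the multiplier $|\xi| - v\cdot\xi$ is strictly positive on $\R^d \setminus \{0\}$, so every level set $\{|\xi| - v\cdot\xi = \lambda\}$ has $d$-dimensional Lebesgue measure zero; consequently an identity $(\hD + iv\cdot\nabla) Q_v = \lambda Q_v$ in $L^2$ would force $\widehat Q_v \equiv 0$, contradicting $Q_v \not\equiv 0$.
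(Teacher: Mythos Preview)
Your argument is correct and follows a genuinely different path from the paper's. The paper obtains the second identity $T_v(Q_v) = \tfrac{d(p-1)}{p+1}\,\|Q_v\|_{L^{p+1}}^{p+1}$ (equivalently $T/M=\alpha/\beta$) as a Pohozaev identity: it integrates the profile equation against the dilation generator $\Lambda \overline{Q}_v = (x\cdot\nabla + d/2)\overline{Q}_v$ (Lemma~\ref{lem:poho}), and to justify this pairing it first proves in Lemma~\ref{lem:Qv_regdecay} that $Q_v \in H^1(\R^d)\cap C_0(\R^d)$ with the pointwise decay $|Q_v(x)|+|\nabla Q_v(x)|\lesssim |x|^{-(d+1)}$, so that $x\cdot\nabla Q_v \in L^2(\R^d)$. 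This decay lemma relies on an explicit integral representation of the resolvent kernel of $\hD + iv\cdot\nabla + 1$ and an iteration argument.

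Your route bypasses all of that. You exploit that the rescaled optimizer $Q_v$ solves two equations simultaneously --- the profile equation and the Euler--Lagrange equation of the scale-invariant Weinstein functional $\mathcal{W}_{v,d,p}$ --- and subtract to obtain a linear relation $A\,(\hD+iv\cdot\nabla)Q_v + B\,Q_v = 0$. The spectral observation that the Fourier multiplier $|\xi|-v\cdot\xi$ has no $L^2$ eigenfunctions (its level sets are measure-zero hypersurfaces when $|v|<1$) then forces $A=B=0$, recovering the Pohozaev ratio without any decay analysis. This is more elementary and arguably cleaner; the price is that it applies only to \emph{optimizers} of \eqref{ineq:GNS_v}, whereas the paper's Lemma~\ref{lem:poho} holds for any $H^{1/2}$ solution of the profile equation. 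For the purposes of Lemma~\ref{lem:Qv_bounds} that restriction is harmless, since only the optimizer is used downstream.
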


\begin{proof}
This follows from Pohozeav identities combined with the estimate for the optimal $\Copt_v$ in Lemma \ref{lem:Cv_bound}.  Indeed, from the identities in Lemma \ref{lem:poho} we conclude
\be
T_v(Q_v) \sim M(Q_v) \sim \int_{\R^d} |Q_v|^{p+1} \, dx.
\ee
Since $Q_v$ turns \eqref{ineq:GNS_v} into an equality, a short calculation yields
$$
M(Q_v)  \sim  \Copt_v^{-\frac{2}{p-1}} \sim (1-|v|)^{d},
$$
where the last step follows from Lemma \ref{lem:Cv_bound}. From the general bound \eqref{ineq:Tv} and $T_v(Q_v) \sim M(Q_v)$ we conclude that $\| Q_v \|_{\dot{H}^{1/2}}^2 \lesssim (1-|v|)^{d-1}$.
\end{proof}

Next, we show that non-trivial traveling solitary waves with speed $|v| < 1$ cannot scatter to a free half-wave.

\begin{prop} \label{prop:no_scat}
Let $d \in \N$ and $1 < p < p_*$. Suppose that $u_v(t,x) = e^{i t \omega} Q_{v, \omega}(x-vt)$, where $Q_v \in H^{1/2}(\R^d)$ solves \eqref{eq:Qv} with some $|v|  < 1$ and $\omega \geq 0$.

Then if there exists $\phi \in H^{1/2}(\R^d)$ with
$$
\lim_{t \to +\infty} \| u_v(t,x) - e^{-it \hD} \phi \|_{H^{1/2}} = 0,
$$
it holds that $\phi \equiv u_v(t) \equiv 0$. An analogous statement holds for negative times $t \to -\infty$.
\end{prop}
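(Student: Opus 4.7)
The plan is to transfer the statement to Fourier space, where both sides of the purported scattering identity become pointwise multiplications by unitary factors, and then extract a rigidity condition by applying the Cauchy criterion in $t$.

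First I would compute
\begin{align*}
\widehat{u_v(t,\cdot)}(\xi) &= e^{it\omega}\,e^{-it\,v\cdot\xi}\,\widehat{Q_{v,\omega}}(\xi),\\
\widehat{e^{-it\hD}\phi}(\xi) &= e^{-it|\xi|}\,\widehat{\phi}(\xi),
\end{align*}
so that the hypothesis $\|u_v(t)-e^{-it\hD}\phi\|_{H^{1/2}}\to 0$ is equivalent (after pulling out the unitary factor $e^{-it|\xi|}$) to
$$
\lim_{t\to+\infty}\int_{\R^d}\langle\xi\rangle\,\bigl|e^{it\psi(\xi)}\widehat{Q_{v,\omega}}(\xi)-\widehat{\phi}(\xi)\bigr|^2\,d\xi=0,
$$
where $\psi(\xi):=|\xi|+\omega-v\cdot\xi$. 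In particular the family $\{e^{it\psi}\widehat{Q_{v,\omega}}\}_{t>0}$ is Cauchy in $L^2(\langle\xi\rangle\,d\xi)$.

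Next I would exploit the Cauchy property at shift $\tau>0$: by the triangle inequality,
$$
\bigl\|e^{i(t+\tau)\psi}\widehat{Q_{v,\omega}}-e^{it\psi}\widehat{Q_{v,\omega}}\bigr\|_{L^2(\langle\xi\rangle d\xi)}\longrightarrow 0\quad\text{as }t\to+\infty.
$$
But because $|e^{it\psi(\xi)}|=1$, this norm is independent of $t$ and equals $\|(e^{i\tau\psi}-1)\widehat{Q_{v,\omega}}\|_{L^2(\langle\xi\rangle d\xi)}$. Hence
$$
(e^{i\tau\psi(\xi)}-1)\,\widehat{Q_{v,\omega}}(\xi)=0\quad\text{for a.e. }\xi\in\R^d,
$$
which forces $\mathrm{supp}\,\widehat{Q_{v,\omega}}\subset\bigl\{\xi\in\R^d:\psi(\xi)\in\tfrac{2\pi}{\tau}\Z\bigr\}$.

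Finally I would show that this level-set-type collection has zero Lebesgue measure. Here is where $|v|<1$ enters decisively: the phase $\psi$ is $C^\infty$ on $\R^d\setminus\{0\}$ with
$$
\nabla\psi(\xi)=\frac{\xi}{|\xi|}-v,\qquad |\nabla\psi(\xi)|\geq 1-|v|>0.
$$
By the implicit function theorem each level set $\{\psi=c\}$, $c\in\R$, is a smooth $(d-1)$-dimensional submanifold of $\R^d\setminus\{0\}$, hence Lebesgue-null; a countable union remains null. (In $d=1$, $\psi$ is piecewise linear with nonzero slopes $1-v$ and $-1-v$, so level sets are at most discrete.) Consequently $\widehat{Q_{v,\omega}}\equiv 0$, so $Q_{v,\omega}\equiv 0$; then $u_v\equiv 0$ and $\phi=\lim_{t\to+\infty}e^{it\hD}u_v(t)\equiv 0$. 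The argument for $t\to-\infty$ is identical after replacing $\tau$ by $-\tau$.

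I do not anticipate a serious obstacle: the whole proof is a one-page Fourier computation. The only place one must be a little careful is the measure-zero step, where both the smooth part ($\xi\neq 0$) and the single point $\xi=0$ have to be handled, but this is immediate from the lower bound $|\nabla\psi|\geq 1-|v|>0$ together with the coarea formula.
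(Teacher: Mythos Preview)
Your argument is correct and considerably more elementary than the paper's proof. The paper splits into two cases: in $d=1$ it projects onto positive and negative Fourier frequencies, uses that $e^{-it\hD}$ acts as a spatial translation on each Hardy subspace, and then invokes weak $L^2$-convergence of translates to zero; in $d\geq 2$ it instead relies on dispersive decay estimates for $e^{-it\hD}$, which in turn requires showing $Q_v\in L^{(p+1)/p}(\R^d)$ via the Mikhlin--H\"ormander multiplier theorem (or the pointwise decay from Lemma~\ref{lem:Qv_regdecay}). Your route sidesteps all of this: the Cauchy-in-$t$ trick reduces everything to the algebraic identity $(e^{i\tau\psi}-1)\widehat{Q_{v,\omega}}=0$, and the nonvanishing of $\nabla\psi$ for $|v|<1$ finishes the job uniformly in all dimensions. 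You never use the profile equation \eqref{eq:Qv}, the condition $\omega\geq 0$, or any integrability of $Q_v$ beyond $Q_v\in H^{1/2}$, so your argument actually proves a slightly more general statement. The paper's approach has the modest advantage of connecting to standard scattering-theory machinery (dispersive estimates, weak limits of translates), which may be more portable to situations where the phase is less explicit; but for the problem at hand your Fourier rigidity argument is cleaner and shorter.
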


\begin{proof} We treat the cases $d=1$ and $d \geq 2$ separately as follows.

\subsubsection*{Case $d=1$}  Let $Q_v^\pm = \Pi^\pm Q_v$ and $\phi^\pm = \Pi^{\pm} \phi$ be the projections of $Q_v \in H^{1/2}(\R)$ and $\phi\in H^{1/2}(\R)$ onto the positive and negative Fourier frequencies, respectively. Likewise, we have $u_v^\pm (t) = \Pi^\pm u_v(t) = e^{it \omega} Q_v^\pm (x-vt)$.

Since $H^{1/2}(\R) = \Pi^+(H^{1/2}(\R)) \oplus \Pi^- (H^{1/2}(\R))$ is an orthogonal decomposition,
\begin{align*}
\| u_v(t) - e^{-it \hD} \phi \|_{H^{1/2}}^2 & = \| u_v^+(t) - e^{-it \hD} \phi^+ \|_{H^{1/2}}^2 +  \| u_v^-(t) - e^{-it \hD} \phi^- \|_{H^{1/2}}^2 \\
& =: A(t) + B(t),
\end{align*}
where $A(t) \to 0$ and $B(t) \to 0$ as $t \to +\infty$ by assumption. Now, by using that $e^{-it \hD}$ acts unitarily on the subspaces $\Pi^{\pm} (H^{1/2}(\R))$, we observe that
$$
A(t) = \| e^{it \hD} u_+(t) - \phi^+ \|_{H^{1/2}}^2 = \| e^{it \omega} Q_v^+(\cdot - (v-1) t) - \phi^+ \|_{H^{1/2}}^2
$$
where we used that $(e^{it \hD} f)(x) = (e^{t \pt_x}f)(x) = f(x +t)$ for all $f \in \Pi^+ (H^{1/2}(\R))$. From the fact that $A(t) \to 0$ as $t \to +\infty$ we deduce  that
$$
\mbox{$e^{it \omega} Q_v^+(\cdot - \delta t) \weakto \phi^+$ weakly in $L^2(\R)$ as $t \to +\infty$,}
$$
where we set $\delta = v-1$. But since it holds $|v| < 1$, we have $\delta \neq 0$ and it is elementary to verify that $e^{it \omega} f(\cdot - \delta t) \weakto 0$ weakly in $L^2(\R)$ as $t \to \infty$ for arbitrary $f \in L^2(\R)$. Thus, we conclude $\phi^+ \equiv 0$. An analogous argument shows that $\phi^- \equiv 0$, by using now that $\delta=v+1 \neq 0$ holds.

In summary, we obtain $\phi = \phi^+ + \phi^- \equiv 0$, which  implies that $\lim_{t \to + \infty} \| u_v(t) \|_{H^{1/2}}=0$. But since $\| u_v(t) \|_{H^{1/2}} = \|Q_v \|_{H^{1/2}}$ for all $t$, we conclude that $u_v(t) \equiv Q_v \equiv 0$ as claimed.

\subsubsection*{Case $d \geq 2$} In space dimensions $d \geq 2$, we can exploit dispersive estimates for $e^{-it \hD}$. First, we note that the hypothesis is equivalent to
$$
\lim_{t \to +\infty} \| e^{it \hD} u_v(t) - \phi \|_{H^{1/2}} = 0
$$
because $e^{-it \hD}$ is unitary on $H^{1/2}(\R^d)$.  Next, as in the previous step above, we now claim that
\be  \label{conv:weak0}
\mbox{$e^{i \omega t} e^{it \hD} Q_v(\cdot-vt) \weakto 0$ weakly in $L^2(\R^d)$ as $t \to +\infty$},
\ee
which would imply that $\phi \equiv 0$ and hence $u_v(t) \equiv Q_v \equiv 0$ in the same way as in the case $d=1$ above.

To prove \eqref{conv:weak0}, we first note that the $L^2$-norm
$$
\| e^{i\omega t} e^{i t  \hD} Q_v(x-vt) \|_{L^2} = \| Q_v \|_{L^2}
$$
is independent of $t$ (and, in particular, it is uniformly bounded in $t$). Hence, by density, it suffices to show that
\be  \label{eq:conv}
\langle e^{i \omega t} e^{it \hD} Q_v(\cdot-vt), \psi \rangle \to 0 \quad \mbox{as} \quad t \to +\infty
\ee
for any test function $\psi \in C^\infty_c(\R^d)$. To establish \eqref{eq:conv}, we first claim that
\be \label{eq:horman}
Q_v \in L^{(p+1)/p}(\R^d).
\ee
Indeed, we note that the map
$$
G=(\hD + i v \cdot \nabla + \omega)^{-1}
$$
with $|v| < 1$ and $\omega \geq 0$ is bounded on $L^q(\R^d)$ for any $1 < q < \infty$ by the Mikhlin--H\"ormander mutliplier theorem. Since we have $Q_v \in L^{p+1}(\R^d)$,  we deduce that $Q_v = G ( |Q_v|^{p-1} Q_v)$ belongs to $L^{(p+1)/p}(\R^d)$. [Alternatively, we could establish \eqref{eq:horman} using the decay estimate for $Q_v$ in Lemma \ref{lem:Qv_regdecay}.]

Now, with \eqref{eq:horman} available, we can  estimate as follows:
\begin{align*}
\left | \langle e^{i \omega t} e^{it \hD} Q_v(\cdot-vt), \psi \rangle  \right | & = \left | \langle e^{i \omega t} Q_v(\cdot-vt), e^{-it \hD} \psi \rangle \right | \\
& \leq \| Q_v \|_{L^{(p+1)/p}} \| e^{-it \hD} \psi \|_{L^{p+1}} \\
& \lesssim  \|Q_v \|_{L^{(p+1)/p}} t^{-(\frac{d-1}{2}) (2-\frac{1}{p+1})} \| (\hD)^{\frac{d+1}{2}} \psi \|_{L^{(p+1)/p}},
\end{align*}
 where in the last step we used a standard dispersive estimate for the half-wave propagator $e^{-it \hD}$. Thus we have found that
$$
\langle e^{i \omega t} e^{it \hD} Q_v(\cdot-vt), \psi \rangle = O \big (t^{-(\frac{d-1}{2}) ( 2-\frac{1}{p+1} )} \big )= o(1) \to 0 \quad \mbox{as} \quad t \to +\infty,
$$
which proves \eqref{eq:conv} and hence \eqref{conv:weak0} follows.

This completes the proof of Proposition \ref{prop:no_scat}, where we notice that the case of negative times $t \to -\infty$ follows in exactly the same fashion.
\end{proof}

\subsection{Proof of Theorem \ref{thm:no_scat}}
Suppose that $d \in \N$ and $1 < p <p_*$. Let  $\eps > 0$ be given.

For space dimensions $d \geq 2$, we can simply choose $Q_v \in H^{1/2}(\R^d) \setminus \{ 0 \}$ as given by Proposition \ref{prop:GNS}. Since $d \geq 2$, we can find $|v| < 1$ such that
$$
\| Q_v \|_{H^{1/2}}^2 = \| Q_v \|_{L^2}^2 + \| Q_v \|_{\dot{H}^{1/2}}^2 \lesssim (1-|v|)^{d} + (1-|v|)^{d-1} < \eps^2,
$$
using the bounds from Lemma \ref{lem:Qv_bounds}. By applying Proposition \ref{prop:no_scat}, we see that the traveling solitary waves $u(t,x) = e^{it}Q_v(x-vt)$ satisfy the conclusion of Theorem \ref{thm:no_scat}.

Next, we turn to the case of $d=1$ space dimension. Let $Q_v \in H^{1/2}(\R) \setminus \{ 0 \}$ be again given from Proposition \ref{prop:GNS}. By Lemma \ref{lem:Qv_bounds}, we have the bounds $\| Q_v \|_{L^2}^2 \lesssim (1-|v|)$ and $\| Q_v \|_{\dot{H}^{1/2}}^2 \lesssim 1$. For the parameter $\lambda > 0$, we define the rescaled function
$$
Q_v^{(\lambda)}(x) = \lambda^{\frac{1}{p-1}} Q_v(\lambda x),
$$
which is easily seen to solve
$$
  \hD  Q_v^{(\lambda)} + i v \cdot \nabla Q_v^{(\lambda)} + \lambda Q_v^{(\lambda)} -  |Q_v^{(\lambda)}|^{p-1} Q_v^{(\lambda)} = 0.
$$
Next, we notice that
$$
\| Q_v^{(\lambda)} \|_{L^2}^2 = \lambda^{\frac{2}{p-1}-1} \| Q_v \|_{L^2}^2 \lesssim \lambda^{\frac{2}{p-1}-1} (1-|v|), \quad \| Q_v^{(\lambda)} \|_{\dot{H}^{1/2}}^2 = \lambda^{\frac{2}{p-1}} \| Q_v \|_{\dot{H}^{1/2}}^2 \lesssim \lambda^{\frac{2}{p-1}}.
$$
With the choice of $\lambda = (1-|v|)^{\frac{p-1}{2p}}$, we thus obtain
$$
\| Q_v^{(\lambda)} \|_{H^{1/2}}^2 = \| Q_v^{(\lambda)} \|_{L^2}^2 + \| Q_v^{(\lambda)} \|_{\dot{H}^{1/2}}^2 \lesssim (1-|v|)^{\frac{p+3}{2p}} + (1-|v|)^{\frac{1}{p}} < \eps^2,
$$
provided we take $|v| < 1$ sufficiently close to 1. By applying Proposition \ref{prop:no_scat}, we see that $u(t,x) = e^{it \lambda} Q_v^{(\lambda)} (x-vt)$  provides the desired non-scattering solutions in Theorem \ref{thm:no_scat} in dimension $d=1$.

The proof of Theorem \ref{thm:no_scat} is now complete. \hfill $\qed$
\begin{appendix}

\section{Some Technical Results}

\begin{lem} \label{lem:const}
Let $D \subset \R$ be a discrete set. Suppose that $f \in H^{1/2}(\R^d)$ satisfies
$$
\mbox{$|f(x)| \in D$ for almost every $x\in \R^d$.}
$$
Then $f \equiv 0$.
\end{lem}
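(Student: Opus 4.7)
The plan is to reduce the claim to the fact that $H^{1/2}(\R^d)$ contains no non-trivial characteristic function.

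First I would pass to the real-valued function $g = |f|$. The reverse triangle inequality $\bigl||f(x)| - |f(y)|\bigr| \leq |f(x) - f(y)|$ combined with the Gagliardo seminorm representation
\[
\|h\|_{\dot{H}^{1/2}}^{2} \;\sim\; \int_{\R^d} \int_{\R^d} \frac{|h(x) - h(y)|^{2}}{|x-y|^{d+1}} \, dx \, dy
\]
immediately gives $g \in H^{1/2}(\R^d)$, and the hypothesis becomes $g(x) \in D$ a.e. Since $D$ is discrete in $\R$, either $0 \in D$ or $\inf D > 0$; in the latter case $g \geq \delta > 0$ a.e., which is incompatible with $g \in L^2(\R^d)$ unless $g \equiv 0$, so I may assume $0 \in D$.

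Second, arguing by contradiction, suppose $g \not\equiv 0$. Countability of $D$ and $g \not\equiv 0$ force the existence of some $d_* \in D$ with $d_* > 0$ and $|\{g = d_*\}| > 0$; Chebyshev combined with $g \in L^2$ then gives $|A| := |\{g = d_*\}| < \infty$. Discreteness of $D$ provides $\rho > 0$ such that $D \cap [d_* - \rho, d_* + \rho] = \{d_*\}$, so I can choose a Lipschitz function $\psi : \R \to [0,1]$ vanishing at $0$, equal to $1$ in a small neighborhood of $d_*$, and supported in $(d_* - \rho, d_* + \rho)$. The composition $\psi \circ g$ then coincides almost everywhere with $\chi_A$, and since composition with a Lipschitz function fixing $0$ preserves $H^{1/2}(\R^d)$ (again via the Gagliardo seminorm and the $L^2$ bound $|\psi(t)| \leq L|t|$), we conclude $\chi_A \in H^{1/2}(\R^d)$ with $0 < |A| < \infty$.

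The last step, which is the main obstacle, is to rule out this possibility, i.e.\ to show that no measurable $A \subset \R^d$ with $0 < |A| < \infty$ can satisfy $\chi_A \in H^{1/2}(\R^d)$. I would invoke this as a classical fact (compare the Brezis note referenced in the acknowledgments) or prove it directly by showing
\[
\int_{A} \int_{A^{c}} \frac{dx \, dy}{|x-y|^{d+1}} = +\infty
\]
whenever $|A|,|A^c| > 0$, which expresses the borderline failure of $H^{1/2}$ to tolerate jump-type discontinuities across hypersurfaces. Granting this, the contradiction is immediate and we conclude $f \equiv 0$.
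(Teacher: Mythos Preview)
Your argument is correct and runs parallel to the paper's proof: both reduce to $g=|f|$, pick a level set of positive measure, and invoke the Brezis fact that $\int_E\int_{E^c}|x-y|^{-(d+1)}\,dx\,dy=+\infty$ whenever $|E|,|E^c|>0$. The only difference is cosmetic: where you compose with a Lipschitz bump $\psi$ to produce $\chi_A\in H^{1/2}(\R^d)$ and then appeal to the impossibility of such a characteristic function, the paper skips this step and directly bounds the Gagliardo seminorm of $g$ from below by observing that discreteness of $D$ gives $|g(x)-g(y)|\ge\delta$ for $(x,y)\in E_a\times E_a^c$, so that
\[
\|g\|_{\dot H^{1/2}}^2 \;\gtrsim\; \delta^2\int_{E_a}\int_{E_a^c}\frac{dx\,dy}{|x-y|^{d+1}}=+\infty.
\]
Your route isolates the clean statement ``$\chi_A\notin H^{1/2}$ for $0<|A|<\infty$'' as a standalone lemma, which is arguably more reusable; the paper's route is one step shorter. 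Either way the substance is the same Brezis integral divergence.
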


\begin{proof}
Since $f \in H^{1/2}(\R^d)$ implies $|f| \in H^{1/2}(\R^d)$, we can assume $f \geq 0$ without loss of generality. Furthermore, by changing $f$ on a set of zero measure if necessary, we can suppose that $f(x) \in D$ for all $x \in \R^d$.

For $a \in D$, let us denote $E_a = \{ x \in \R^d : f(x) = a \}$. Now, take $a \in D$ such that $|E_a| > 0$ holds (where $|\cdot|$ denotes the $d$-dimensional Lebesgue measure). We claim that
\be \label{eq:Ea}
f(x) = a \quad \mbox{for almost every $x \in \R^d$}.
\ee
Indeed, once this is shown, we conclude from $f \in L^2(\R^d)$ that $a=0$, which yields that $f \equiv 0$ as desired.

Thus it remains to show \eqref{eq:Ea} and let us write $E = E_a$ for notational convenience. Suppose that \eqref{eq:Ea} was false. Then $|E^c| > 0$ holds for  the complement $E^c = \R^d \setminus E$. Next, we recall that
\begin{align*}
\| f \|_{\dot{H}^{1/2}(\R^d)}^2 & = c_d \int \! \! \int_{\R^d \times \R^d} \frac{|f(x)-f(y)|^2}{|x-y|^{d+1}} \, dx \, dy
\end{align*}
with some constant $c_d > 0$. Since $D \subset \R$ is discrete, there is some $\delta > 0$ such that
$$
|f(x) - f(y)| \geq \delta \quad \mbox{for all $(x,y) \in E \times E^c$.}
$$
Therefore, we can estimate
$$
\int \! \! \int_{\R^d \times \R^d} \frac{|f(x)-f(y)|^2}{|x-y|^{d+1}} \, dx \, dy \geq  \delta^2 \int \! \! \int_{E \times E^c} \frac{dx \, dy}{|x-y|^{d+1}} = +\infty,
$$
where the last step follows from \cite[Corollary 2]{Brezis} using that both $|E| > 0$ and $|E^c| > 0$. Hence we find $f \not \in H^{1/2}(\R^d)$, which is absurd. Thus \eqref{eq:Ea} holds and the proof is complete.
\end{proof}

\begin{lem} \label{lem:moser}
For any $f \in H^{1/2}(\R)$ and $\alpha \geq 0$, it holds that $e^{\alpha |f|^2} f \in L^1_{\mathrm{loc}}(\R)$.
\end{lem}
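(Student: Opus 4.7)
The plan is to reduce the assertion, via Cauchy--Schwarz, to the local integrability of $e^{\beta|f|^2}$ for every $\beta\geq 0$, and then to invoke a Moser--Trudinger type embedding for $H^{1/2}(\R)$. Concretely, for any bounded interval $K\subset\R$,
\begin{equation*}
\int_K e^{\alpha|f|^2}|f|\,dx \leq \|f\|_{L^2(K)}\left(\int_K e^{2\alpha|f|^2}\,dx\right)^{1/2},
\end{equation*}
and since $f\in H^{1/2}(\R)\subset L^2(\R)$, it suffices to prove that $e^{\beta|f|^2}\in L^1_{\mathrm{loc}}(\R)$ for every $\beta\geq 0$.

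For the latter, I would appeal to the classical Moser--Trudinger inequality for $H^{1/2}(\R)$ (due to Ozawa): there exist universal constants $\alpha_0>0$ and $C_0>0$ so that $\int_\R(e^{\alpha_0|g|^2}-1)\,dx \leq C_0\|g\|_{L^2}^2$ for every $g\in H^{1/2}(\R)$ with $\|g\|_{\dot H^{1/2}}\leq 1$. Given $\beta>0$ (the case $\beta=0$ being trivial), use the density of $C_c^\infty(\R)$ in $H^{1/2}(\R)$ to split $f = h + r$ with $h \in C_c^\infty(\R)$ and $\|r\|_{\dot H^{1/2}}^2 < \alpha_0/(2\beta)$. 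From the pointwise bound $|f|^2\leq 2|h|^2+2|r|^2$ we obtain
\begin{equation*}
e^{\beta|f|^2}\leq e^{2\beta\|h\|_\infty^2}\cdot e^{2\beta|r|^2},
\end{equation*}
and after rescaling $r \mapsto r/\|r\|_{\dot H^{1/2}}$ the Moser--Trudinger inequality shows that the second factor lies in $L^1(\R)$, yielding the desired local integrability.

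The main obstacle is locating the correct form of the Moser--Trudinger inequality for the fractional space $H^{1/2}(\R)$ and matching the scaling so that the effective exponent is subcritical; this is where the approximation argument is essential, since one cannot a priori expect $f$ itself to be below the critical threshold. For the purely local statement needed here, one can in fact bypass the sharp Moser--Trudinger form altogether: the embedding $H^{1/2}(\R)\hookrightarrow\mathrm{BMO}(\R)$ combined with the John--Nirenberg inequality already implies that $e^{\beta|f|^2}\in L^1_{\mathrm{loc}}(\R)$ for every $\beta\geq 0$, which provides a slightly shorter alternative route to the same conclusion.
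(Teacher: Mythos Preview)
Your main argument is correct and coincides with the paper's own proof: both reduce via H\"older to the local integrability of $e^{\gamma|f|^2}$, invoke a Moser--Trudinger inequality for $H^{1/2}(\R)$, and use the density of $C_c^\infty$ to absorb the large exponent by splitting $f$ into a bounded part and a remainder with small (semi)norm. The only cosmetic difference is that the paper quotes the Lam--Lu form of the inequality (with the full $H^{1/2}$ norm and a local bound $C|\Omega|$), whereas you quote Ozawa's form (with the $\dot H^{1/2}$ seminorm and a global bound); either version works here.

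However, your proposed ``shorter alternative route'' via $H^{1/2}(\R)\hookrightarrow\mathrm{BMO}(\R)$ and John--Nirenberg does not do the job. John--Nirenberg yields only \emph{linear} exponential integrability: for $f\in\mathrm{BMO}$ and a cube $Q$ one gets $e^{c|f-f_Q|}\in L^1(Q)$ for $c$ small depending on $\|f\|_{\mathrm{BMO}}$. This is strictly weaker than the quadratic exponential integrability $e^{\beta|f|^2}\in L^1_{\mathrm{loc}}$ for every $\beta>0$ that you need, since $e^{\beta t^2}$ dominates $e^{ct}$ as $t\to\infty$ for any fixed $c$. The gap cannot be closed by the same approximation trick either: making $\|r\|_{\mathrm{BMO}}$ small lets you enlarge $c$ in $e^{c|r|}$, but never upgrades the exponent from linear to quadratic. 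So this alternative should be dropped; the Moser--Trudinger argument is genuinely needed.
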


\begin{proof}
This result is probably known to experts in the area of embeddings of Orlicz spaces into Sobolev spaces. For the reader's convenience, we provide some details as follows.

Let $\Omega \subset \R$ be a compact set and suppose that $f \in H^{1/2}(\R)$. First, we claim that
\be \label{ineq:moser1}
\int_\Omega e^{\gamma |f|^2} \, dx < +\infty \quad \mbox{for any $\gamma > 0$.}
\ee
Indeed, if this holds true, we readily deduce that $e^{\alpha |f|^2} f \in L^1_{\mathrm{loc}}(\R)$  from H\"older's inequality and $f \in L^2(\R)$.

To show estimate \eqref{ineq:moser1}, we recall a result from \cite[Theorem 1.6]{LL}, which combined with a simple scaling argument shows that, for each $\beta> 0$, we have
$$
\sup_{\| u \|_{H^{1/2}(\R)}^2 \leq \beta_0/\beta} \int_{\Omega} e^{\beta |u|^2} \, dx \leq C |\Omega|,
$$
where $C > 0$ and $\beta_0 > 0$ are universal constants. Next, we observe the pointwise bound $|f|^2 \leq 2 |f-\varphi|^2  + 2 |\varphi|^2 \leq 2 |f-\varphi|^2 + 2 \|\varphi\|_{L^\infty}$ for any bounded function $\varphi \in L^\infty(\R)$. Thus for every $\gamma > 0$ we find the bound
$$
\int_{\Omega} e^{\gamma|f|^2} \, dx \leq e^{2 \gamma \| \varphi \|_{L^\infty}^2} \int_{\Omega} e^{2 \gamma |f- \varphi|^2} \, dx \leq C e^{2 \gamma \| \varphi \|_{L^\infty}^2} |\Omega| < +\infty,
$$
provided we choose $\varphi \in C^\infty_c(\R)$ such that $\| f - \varphi \|_{H^{1/2}(\R)}^2 \leq \beta_0/2\gamma$, which is possible by density of $C^\infty_c(\R)$ in $H^{1/2}(\R)$. This proves \eqref{ineq:moser1}.
\end{proof}

\begin{lem} \label{lem:poho}
For $Q_{v} \in H^{1/2}(\R^d)$ as in Proposition \ref{prop:GNS}, we have the identities
\be \label{eq:poho_app1}
(Q_{v}, \hD  Q_{v}) + (Q_{v}, i v \cdot \nabla Q_{v}) +  \| Q_{v} \|_{L^2}^2 - \| Q_{v} \|_{L^{p+1}}^{p+1} = 0,
\ee
\be \label{eq:poho_app2}
(Q_{v}, \hD Q_{v}) + (Q_{v}, i v \cdot \nabla Q_{v}) - d \left ( \frac{p-1}{p+1}  \right ) \| Q_{v} \|_{L^{p+1}}^{p+1} = 0.
\ee
\end{lem}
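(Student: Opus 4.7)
Identity \eqref{eq:poho_app1} is immediate from the Euler--Lagrange equation
$$
\hD Q_v + iv\cdot\nabla Q_v + Q_v - |Q_v|^{p-1}Q_v = 0
$$
supplied by Proposition \ref{prop:GNS}. Pairing both sides with $Q_v$ in the $H^{-1/2}$--$H^{1/2}$ duality, Plancherel gives $(Q_v,\hD Q_v) = \int |\xi||\widehat{Q}_v|^2\, d\xi$ and $(Q_v, iv\cdot\nabla Q_v) = -\int (v\cdot\xi)|\widehat{Q}_v|^2\, d\xi$, both real, while $(Q_v, Q_v) = M(Q_v)$ and $(Q_v, |Q_v|^{p-1}Q_v) = \|Q_v\|_{L^{p+1}}^{p+1}$ are manifestly real; the claim follows.

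For \eqref{eq:poho_app2} my plan is a standard Pohozaev scaling argument. Introduce the action functional
$$
S(u) = \tfrac{1}{2}T_v(u) + \tfrac{1}{2}M(u) - \tfrac{1}{p+1}\int_{\R^d}|u|^{p+1}\,dx
$$
on $H^{1/2}(\R^d)$, whose Euler--Lagrange equation $S'(u) = 0$ is precisely the equation for $Q_v$. Consider the dilations $Q_v^{(\lambda)}(x) := Q_v(\lambda x)$ for $\lambda > 0$; a change of variables (or, equivalently, Fourier rescaling using that $|\xi|$ and $v\cdot\xi$ are $1$-homogeneous) gives the homogeneities
$$
T_v(Q_v^{(\lambda)}) = \lambda^{1-d}T_v(Q_v), \quad M(Q_v^{(\lambda)}) = \lambda^{-d}M(Q_v), \quad \|Q_v^{(\lambda)}\|_{L^{p+1}}^{p+1} = \lambda^{-d}\|Q_v\|_{L^{p+1}}^{p+1}.
$$
Assuming the curve $\lambda\mapsto Q_v^{(\lambda)}$ is differentiable into $H^{1/2}(\R^d)$ at $\lambda=1$, the critical-point property of $Q_v$ forces $\frac{d}{d\lambda}\big|_{\lambda=1} S(Q_v^{(\lambda)}) = 0$, which reads
$$
\tfrac{1-d}{2}T_v(Q_v) - \tfrac{d}{2}M(Q_v) + \tfrac{d}{p+1}\|Q_v\|_{L^{p+1}}^{p+1} = 0.
$$
Using \eqref{eq:poho_app1} to eliminate $M(Q_v) = \|Q_v\|_{L^{p+1}}^{p+1} - T_v(Q_v)$ and collecting terms yields $T_v(Q_v) = d\tfrac{p-1}{p+1}\|Q_v\|_{L^{p+1}}^{p+1}$, which is \eqref{eq:poho_app2}.

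The only nontrivial step is legitimizing the $\lambda$-derivative, which amounts to establishing that $x\cdot\nabla Q_v \in H^{1/2}(\R^d)$. Since $|v| < 1$, the symbol $|\xi| - v\cdot\xi + 1$ of $\hD + iv\cdot\nabla + 1$ is elliptic of order one (bounded below by a multiple of $(1-|v|)(1+|\xi|)$), so a standard bootstrap applied to $Q_v = (\hD + iv\cdot\nabla + 1)^{-1}(|Q_v|^{p-1}Q_v)$ in the subcritical regime places $Q_v$ in every $H^s(\R^d)$, and the spatial decay estimate of Lemma \ref{lem:Qv_regdecay} (used elsewhere in the proof of Proposition \ref{prop:no_scat}) supplies enough control on $xQ_v$, hence on $x\cdot\nabla Q_v$, in $L^2(\R^d)$ and then in $H^{1/2}(\R^d)$. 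An alternative that bypasses \emph{a priori} regularity altogether is to test the equation against a truncated multiplier $\chi_R(x)\,x\cdot\nabla\overline{Q}_v$, handle the nonlocal piece through the Fourier-side commutator $[x\cdot\nabla,\hD] = -\hD$ (Euler's relation for the degree-one symbol $|\xi|$), and pass to the limit $R\to\infty$; either route yields the same identity.
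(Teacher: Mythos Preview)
Your proof is correct and follows essentially the same route as the paper. The paper tests the equation directly against the dilation generator $\Lambda\overline{Q}_v$ with $\Lambda = x\cdot\nabla + \tfrac{d}{2}$, while you differentiate the action $S$ along the scaling path $\lambda\mapsto Q_v(\lambda\,\cdot\,)$; these are the same computation (the generator of your path at $\lambda=1$ is precisely $x\cdot\nabla Q_v$), and both invoke Lemma~\ref{lem:Qv_regdecay} for the regularity and decay needed to justify the pairing --- the paper in fact only uses $x\cdot\nabla Q_v\in L^2$ together with $Q_v\in H^1$, slightly less than the $H^{1/2}$ membership you ask for.
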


\begin{proof}
Identity \eqref{eq:poho_app1} simply follows from integrating the equation for $Q_{v}$ against $\overline{Q}_{v}$. However, the rigorous proof of \eqref{eq:poho_app2} requires some care. By the estimates in Lemma \ref{lem:Qv_regdecay} below, we are allowed to integrate the equation against $\Lambda \overline{Q}_v$ with $\Lambda = x \cdot \nabla + d/2$. Elementary calculations then show
$$
(\Lambda Q_v, \hD Q_v) = (Q_v, \hD Q_v), \quad (\Lambda Q_v, i v \cdot \nabla Q_v) = (Q_v, i v \cdot \nabla Q_v),
$$
$$
(\Lambda Q_v, Q_v) = 0, \quad (\Lambda Q_v, |Q_v|^{p-1} Q_v) = d \left ( \frac{p-1}{p+1} \right ) \| Q_v \|_{L^{p+1}}^{p+1}.
$$
This completes the proof of \eqref{eq:poho_app2}.
\end{proof}

\begin{lem} \label{lem:Qv_regdecay}
Let $d \in \N$, $1 < p < p_*$, and $v \in \R^d$ with $|v| < 1$. Suppose that $Q_v \in H^{1/2}(\R^d)$ solves
$$
\hD Q + i v \cdot \nabla Q_v + Q_v -|Q_v|^{p-1} Q_v = 0 .
$$
Then $Q_v \in H^1(\R^d) \cap C_0(\R^d)$ and we have the decay estimates
$$
|Q_v(x)| + |\nabla Q_v(x)| \leq  \frac{A}{|x|^{d+1}}.
$$
with some constant $A>0$. In particular, we have $x \cdot \nabla Q_v \in L^2(\R^d)$.
\end{lem}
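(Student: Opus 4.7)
The strategy is to first upgrade regularity via a resolvent bootstrap, then derive pointwise decay from a quantitative estimate on the convolution kernel of that resolvent. Rewrite the equation as the fixed-point identity $Q_v = G\bigl(|Q_v|^{p-1} Q_v\bigr)$ with $G := (\hD + iv\cdot\nabla + 1)^{-1}$. Since $|v|<1$, the Fourier symbol $m(\xi) := (|\xi| - v\cdot\xi + 1)^{-1}$ is bounded, smooth away from $\xi=0$, and satisfies the Mikhlin--H\"ormander hypotheses; in particular, both $G$ and $\nabla G$ are bounded $L^q \to L^q$ for every $1 < q < \infty$.

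For the $H^1 \cap C_0$ conclusion, I would run a standard bootstrap. Starting from $Q_v \in H^{1/2}(\R^d)$ and Sobolev embedding, $Q_v \in L^q$ for $q$ in the admissible subcritical range; hence $|Q_v|^{p-1}Q_v \in L^{q/p}$. Applying $G$ (which gains one derivative in $L^{q/p}$) gives $Q_v \in W^{1,q/p}$. The subcritical hypothesis $p < p_\ast$ ensures each iteration is strictly improving; finitely many steps produce $Q_v \in W^{s,r}$ with $sr > d$, so Sobolev embedding yields $Q_v \in H^1(\R^d) \cap C_0(\R^d)$.

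The main obstacle is the pointwise decay, which rests on the kernel estimate
$$
|K(x)| + \langle x\rangle\,|\nabla K(x)| \;\lesssim\; \langle x\rangle^{-(d+1)} \qquad (|x|\geq 1)
$$
for $K := \mathcal{F}^{-1} m$, together with a locally integrable singularity at the origin. The cleanest route is via the subordination formula for the Poisson semigroup $e^{-t\hD}$, whose kernel is $P_t(x) = c_d\, t\,(t^2+|x|^2)^{-(d+1)/2}$. Formally one has
$$
K(x) \;=\; \int_0^{\infty} e^{-t}\, P_t\bigl(x - itv\bigr)\, dt,
$$
interpreted via analytic continuation justified by a contour deformation; the condition $|v|<1$ is decisive because $\mathrm{Re}\bigl(t^2 + |x-itv|^2\bigr) = (1-|v|^2)t^2 + |x|^2$ stays strictly positive, bounding the integrand by $c_d\, t\, e^{-t}\bigl((1-|v|^2)t^2 + |x|^2\bigr)^{-(d+1)/2}$, whose time integral is $\lesssim \langle x\rangle^{-(d+1)}$. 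Differentiating under the integral handles $\nabla K$.

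With the kernel bound in hand, the rest is routine. Write $Q_v = K \ast F$ with $F := |Q_v|^{p-1}Q_v \in L^1 \cap L^\infty$. For $|x|\geq 1$, split the convolution into $\{|y|\leq |x|/2\}$ and its complement. On the first region, $|K(x-y)| \lesssim |x|^{-(d+1)}$ yields the bound times $\|F\|_{L^1}$; on the second region, the regularity fact $Q_v \in C_0(\R^d)$ supplies an initial polynomial decay which a short bootstrap upgrades to $|Q_v(x)|\lesssim |x|^{-(d+1)}$. The same argument applied to $\nabla Q_v = (\nabla K) \ast F$ gives the matching gradient decay, and then $|x|\,|\nabla Q_v(x)| \lesssim \langle x\rangle^{-d} \in L^2(\R^d)$ is immediate.
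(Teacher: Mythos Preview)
Your proposal is correct and follows essentially the same route as the paper: both obtain the resolvent kernel via analytic continuation of the Poisson semigroup formula (exploiting $\mathrm{Re}\bigl(t^2 + (x+itv)^2\bigr) = (1-|v|^2)t^2 + |x|^2 > 0$ when $|v|<1$), then bootstrap regularity and iterate the convolution identity to extract the $|x|^{-(d+1)}$ decay. The only minor difference is that the paper treats $\nabla Q_v$ by differentiating the equation and rewriting it as $\partial_k Q_v = K \ast \bigl(p|Q_v|^{p-1}\partial_k Q_v\bigr)$ with the \emph{same} kernel $K$, which avoids having to deal with the non-integrable local singularity of $\nabla K$ that your formulation $\nabla Q_v = (\nabla K)\ast F$ would otherwise require you to handle separately.
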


\begin{proof} We divide the proof as follows.

\subsubsection*{Case $v=0$}
Let us start with the special case $v=0$. A way to prove pointwise decay of $Q_v$ can be obtained by using that the resolvent operator $R = (\hD + 1)^{-1}$ has an integral kernel $G$ such that
\begin{enumerate}
\item[(i)] $G \in L^p(\R)$ for $1 < p < \infty$ with $1-\frac{1}{p} < 1/d$.
\item[(ii)] $|G(x)| \leq C |x|^{-(d+1)}$ for $|x| \geq 1$.
\end{enumerate}
Then, by using (i) and iterating the equation for $Q = Q_{v=0} \in H^{1/2}(\R^d)$, one finds that $Q \in H^1(\R^d)\cap L^q(\R^d)$ for any $q \in [2, \infty]$. Furthermore, by writing the equation for $Q$ as $Q=G \ast (|Q|^{p-1}Q)$ it follows that $Q \in C_0(\R^d)$, i.\,e., it $Q$ is continuous and vanishes at infinity.

To show the pointwise decay estimate for $Q$, we can adapt known arguments using the decay estimates for $G$. For the reader's convenience, we provide a brief sketch here. If we let $V(x) = |Q(x)|^{p-1} Q(x)$, we can write
$$
Q(x) = \int_{\R^d} G(x-y) V(y) Q(y) \, dy.
$$
Now, by iterating this equation and using initially that $V$ and $Q$ are continuous functions vanishing at infinity, we can conclude by using property (ii) for $G$ that we have
$$
|Q(x)| \leq A |x|^{-(d+1)} \quad \mbox{for all $x \in \R^d$}
$$
with some constant $A> 0$. We refer to \cite{H} for details, where a general decay estimate of solitary waver for fractional NLS is worked out.

Next, by differentiating the equation for $Q$ with respect to $x$, we obtain
\be \label{eq:kernel}
L \pt_{x_k} Q = 0 \quad \mbox{for $k=1,\ldots,d$},
\ee
where $L$ is the linearized operator given by
$$
L = \hD + 1 - p |Q|^{p-1}.
$$
Using the bounds (i) and (ii), arguments in the same fashion as sketched above show that $\pt_k Q$ as a solution of \eqref{eq:kernel} are continuous and satisfy the pointwise decay estimate
$$
|\pt_k Q(x)| \leq A |x|^{-(d+1)}.
$$
This concludes our sketch for the case $v=0$.

\subsubsection*{Case $0 < |v| < 1$}
We can extend the properties (i) and (ii) to the kernel $G_v(x-y)$ for the resolvent operator $R_v = (\hD + i v \cdot \nabla + 1)^{-1}$ provided that $|v| < 1$ holds. Indeed, this can be achieved by the following arguments. Note that $G_v = \mathcal{F}^{-1}\{ (|\xi| - v \cdot \xi +1)^{-1} \}$ and observe that
$$
\frac{1}{|\xi| - v \cdot \xi +1} = \int_0^\infty e^{-t} e^{-t ( |\xi| -  v \cdot \xi)} \, dt.
$$
Following an idea in \cite{FL2},  we can use the explicit formula for $\mathcal{F}^{-1}(e^{-t |\xi|})$ and analytic continuation to find
\be
G_v(x)  = c_d \int_0^\infty e^{-t} \frac{t}{(t^2 + (x+itv)^2)^{\frac{d+1}{2}}} \, dt,
\ee
with some constant $c_d > 0$. Note here that the complex number
$$
w= t^2 + (z+itv)^2 = (1-|v|^2) t^2 +  |x|^2 + 2it v \cdot x
$$
satisfies $|\arg z| < \pi/2$ and $|w| \geq |\mathrm{Re} \, w| = (1-|v|^2) t^2 + |x|^2$ for $t > 0$, $x \in \R^d$ and $|v| < 1$.  Using the trivial bound $e^{-t} \leq 1$ for $t >0$, we obtain the rough estimate
\be \label{ineq:Gv_bound}
|G_v(x)| \lesssim \int_0^{\infty} \frac{t}{((1-|v|^2) t^2 + |x|^2)^{\frac{d+1}{2}}} \, dt \lesssim A_v  \begin{dcases*} {|x|^{-d+1}} & for $d \geq 2$, \\
(\log |x|) + 1 & for $d=1$, \end{dcases*}
\ee
with some constant $A_v > 0$ depending on $|v| < 1$ and $d$. Furthermore, we find the bound
\begin{align*}
|G_v(x)| & \lesssim \frac{1}{|x|^{d+1}} \int_0^\infty \frac{e^{-t} t}{((1-|v|^2) t^2/|x|^2 +1 )^{\frac{d+1}{2}} } \, dt \\
& \lesssim \frac{1}{|x|^{d+1}} \int_0^\infty e^{-t} t \, dt \lesssim \frac{1}{|x|^{d+1}} \quad \mbox{for $|x| \geq 1$}.
\end{align*}
This shows (ii) for $G_v$ and together with \eqref{ineq:Gv_bound} we deduce that (i) holds too.

Having now established (i) and (ii) for $G_v$ with $|v| < 1$, we can now proceed in the same fashion as in the case $v=0$ described above. We omit the details.
\end{proof}

\section{On the Energy-Critical Case}

We give the proof of Theorem \ref{thm:HW_crit} showing that small data scattering fails for the energy-critical half-wave equation \eqref{eq:HW_crit} in general.

Let $d \geq 2$ and recall that $p_*+1 = \frac{2d}{d-1}$. Let us take $v \in \R^d$ with $|v| < 1$. We consider the variational problem
\be \label{def:sup_sob}
\Copt_{v,d,p_*} = \sup_{u \in \dot{H}^{1/2}(\R^d) \setminus \{ 0 \}} \frac{\displaystyle \int_{\R^d} |u|^{\frac{2d}{d-1}} \,dx  }{(T_v(u))^{\frac{d}{d-1}}},
\ee
where the functional $T_v : \dot{H}^{1/2}(\R^d) \to \R$ was introduced in \eqref{def:Tv} above. By known variational methods (e.\,g.~concentration-compactness methods) and using that $|v| < 1$, we deduce that the supremum in \eqref{def:sup_sob} is attained. Let us denote the maximizers by $W_v \in \dot{H}^{1/2}(\R^d) \setminus \{0 \}$ in what follows. After a suitable rescaling $W_v(x) \mapsto a W_v(bx)$ with constants $a,b > 0$, we can assume that $W_v \in \dot{H}^{1/2}(\R^d)$ satisfies the corresponding Euler-Lagrange equation
\be
\hD W_v + i v \cdot \nabla W_v - |W_v|^{\frac{2}{d-1}} W_v = 0 \quad \mbox{in $\R^d$}.
\ee
By integrating this equation against $\overline{W}_v \in \dot{H}^{1/2}(\R^d)$ and using its maximizing property for \eqref{def:sup_sob}, we directly find that
\be
T_v(W_v) = \int_{\R^d} |W_v|^{\frac{2d}{d-1}} \, dx \quad \mbox{and} \quad \Copt_{v,d,p_*} = (T_v(W_v))^{- \frac{1}{d-1}}.
\ee
On the other hand, by adapting the argument used in the proof of Lemma \ref{lem:Cv_bound}, we deduce
\be
\Copt_{v,d,p_*} \sim (1-|v|)^{-\frac{d}{d-1}}.
\ee
From this fact together with the general bound \eqref{ineq:Tv} we thus deduce
\be
\| W_v \|_{\dot{H}^{1/2}}^2 \leq (1-|v|)^{-1} \cdot T_v(W_v)  \lesssim (1-|v|)^{d-1}.
\ee
Since $d \geq 2$, we conclude that
\be
\|W_v\|_{\dot{H}^{1/2}} \to 0 \quad \mbox{as} \quad |v| \to 1^-.
\ee

Therefore, we can construct traveling solitary waves $u(t,x) = e^{it} W_v(x-vt) \in \dot{H}^{1/2}(\R^d)$ for the energy-critical half-wave equation \eqref{eq:HW_crit} with arbitrarily small energy norm by choosing $|v| < 1$ sufficiently close to 1. A straightforward adaption of the proof of Proposition \ref{prop:no_scat} now shows that there is no function $\phi \in \dot{H}^{1/2}(\R^d)$ such that
\be
\lim_{t \to +\infty} \| e^{it} W_v(x-vt) - e^{-it \hD} \phi \|_{\dot{H}^{1/2}} = 0.
\ee
This completes the proof of Theorem \ref{thm:HW_crit}. \hfill $\qed$
\end{appendix}

\end{document}